\documentclass[12pt]{article}
\usepackage{amsmath,amssymb,amsfonts,amsthm,enumerate,url,hyperref}
\usepackage{eucal} 
\usepackage{xcolor}
\usepackage{youngtab}
\usepackage{young}
\usepackage{lscape}
\usepackage{tikz}
\usepackage{environ}
\usepackage{caption}
\usepackage{subcaption}
\makeatletter
\newsavebox{\measure@tikzpicture}
\NewEnviron{scaletikzpicturetowidth}[1]{%
  \def\tikz@width{#1}%
  \def\tikzscale{1}\begin{lrbox}{\measure@tikzpicture}%
  \BODY
  \end{lrbox}%
  \pgfmathparse{#1/\wd\measure@tikzpicture}%
  \edef\tikzscale{\pgfmathresult}%
  \BODY
}
\makeatother

\newcommand{\javaplex}{{\texttt{Javaplex}} }

\newcommand{\lmp}{p}
\newcommand{\lm}{L}

\newcommand{\R}{\mathbb{R}}

\newcommand{\eps}{\epsilon}

\newcommand{\sig}{\sigma}

\newcommand{\vor}{V}
\newcommand{\ordering}[2]{{#1}({#2})}

\newtheorem{thm}{Theorem}

\newtheorem{lemma}{Lemma}

\theoremstyle{definition}
\newtheorem{defn}{Definition}

\newtheorem{remark}{Remark}

\title{A new construction for sublevel set persistence}
\author{Erik Carlsson, John Carlsson}
\begin{document}
\maketitle

\begin{abstract}
We construct a filtered simplicial complex $(X_\lm,f_\lm)$
associated to a subset $X\subset \R^d$, a function $f:X\rightarrow \R$ with compactly supported sublevel sets, and a collection
of landmark points $\lm\subset \R^d$.
The persistence values $f_\lm(\Delta)$ are defined
as the minimizing values of a family of
constrained optimization problems, whose domains are certain
higher order Voronoi cells associated to $\lm$.
We prove that $H_k^{a,b}(X_\lm)\cong H^{a,b}_k(X)$ in the case when $X=\R^d$
and $f$ is smooth, the
landmarks are sufficiently dense, and $a<b$ are generic.
We show that the construction produces
desirable results in some examples.
\end{abstract}
\section{Introduction}

Let $f:X\rightarrow \R$, and let
$X(a)=f^{-1}(-\infty,a]$ be the sublevel set with the induced topology.
The $k$th persistent homology group $H^{a,b}_k(X)$ is
defined as the image of the homomorphism
\begin{equation}
\label{eq:sublevcontin}
    i^{a,b}_k: H_k(X(a))\rightarrow H_k(X(b))
\end{equation}
which is induced from the inclusion map $i^{a,b}:X(a)\rightarrow X(b)$,
defined for $a\leq b$.

The homology groups in \eqref{eq:sublevcontin}
have several advantages over distance-based
persistent homology, but are generally difficult to compute,
particularly in higher dimensions.
One application of sublevel set persistence is
to topological data analysis, which involves 
computing the superlevel set persistence of
a suitable density estimator $\rho$
(or the sublevel set persistence of 
$-\log(\rho)$).
In a seminal paper \cite{chazal2013clustering}, 
the authors applied zeroth dimensional 
superlevel set persistence to define a novel 
clustering scheme, which
provably computed the correct number of clusters,
in a mathematically precise sense.
Another area where sublevel set persistence has been
well-suited is time series analysis, see for instance
\cite{chung2020time,chung2021persistent,myers2020separating,harer2013sliding,ravishanker2019topological,seversky2016pattern}.
The higher dimensional case has been studied using the Morse-Smale complex 
\cite{gunther2012efficient} in the context of
grayscale images, which has also been applied
to topics such as energy
landscapes in particle systems \cite{mirth2020representations}.
For other applications and references, see also
\cite{adams2021machine,beketayev2018measuring,Carlsson2014topological,carlsson2009topology,carlsson2019homotopy,edelsbrunner2008survey,munch2020hurricanes}.

We construct a filtered complex $(X_\lm,f_{\lm})$ associated to a subset
$X\subset \R^d$, a function $f:X\rightarrow \R$
which is bounded below, 
and a set of landmark points $\lm=\{\lmp_1,...,\lmp_n\} \subset X$. 
We have a simplex 
$\Delta=\{i_0,...,i_k\} \in X_\lm$ 
if certain higher order Voronoi cells 
corresponding to the $(k+1)!$ ways of ordering 
the elements of $\Delta$ are nonempty,
and $f_\lm(\Delta)$ is determined by the
minimum values of $f$ over those regions,
which may be practically obtained using optimization software. 
We consider the case of $X\subset \R^d$,
though this construction may be readily extended to a 
metric space with a locally convex metric, 
or to a space which is equipped with a 
map $X\rightarrow \R^d$, which may or may not be an inclusion map.

In Section \ref{sec:struct}, we review basic definitions
and construct the filtered complex $(X_\lm,f_\lm)$.
In Section \ref{sec:thm}
we prove that under certain conditions on $f$ such as being the
restriction of a smooth function, 
we have that $H_k^{a,b}(X_L) \cong H_k^{a,b}(X)$
for generic $a<b$, provided that $\lm$ is sufficiently dense in $X(b)$. In Section \ref{sec:ex}, we compute the persistent homology
groups in examples involving data sets, 
a higher-dimensional one involving the 
continuous form of the Ising model from statistical mechanics
\cite{nishikawa1976ising}, and
a function $f$ whose extremal sets
are the configuration space of 3 distinct points in $\R^2$.

\section{Construction of the complex}

\label{sec:struct}

We begin with some preliminary definitions,
and then define our main construction.

\subsection{Sublevel set persistent homology}
A simplicial complex $K$ will mean an abstract simplicial complex, which is a
collection of nonempty subsets $\Delta\subset S$ of some 
set of vertices $S$,
which is closed under taking nonempty subsets.
  A filtered simplicial complex is a pair $(K,f)$ where
  $K$ is a simplicial complex, and $f$ is a real-valued function
  $f : K\rightarrow \mathbb{R}$ on the simplices of $K$, such that
  $f(\sigma)\leq f(\tau)$ whenever $\sigma$ is a face
  of $\tau$.
Then for each $a\in \R$, we have that the sublevel set
$K(a)=f^{-1}(-\infty,a]$ is a subcomplex of $K$.
The persistent homology group is $H_k^{a,b}(K)$ is defined as the image of
\begin{equation}
  \label{phdef}
i^{a,b}_k : H_k(K(a))\rightarrow H_k(K(b)),
\end{equation}
where $i^{a,b} : K(a)\rightarrow K(b)$ is the inclusion
map for $a\leq b$. The Betti numbers are
denoted as usual by $\beta_k$. If $f:X\rightarrow \R$,
the sublevel set persistence $H^{a,b}_k(X)$ is
defined the same way as in \eqref{phdef}, when $X(a)$ is 
just the sublevel set $f^{-1}(-\infty,a]$ with the induced topology.

\subsection{Voronoi diagrams}
Let $\lm=\{\lmp_1,...,\lmp_n\} \subset \mathbb{R}^d$ be a subset.
\begin{defn}
\label{def:vor}For each nonempty subset $\{i_1,...,i_m\}\subset 
[n]=\{1,...,n\}$, we have the higher order Voronoi cell
\[\vor_{\{i_1,...,i_m\}}(\lm)=\left\{p : j \notin [n]\Rightarrow d(p,p_i)\leq d(p,p_j)\right\},\]
If $(i_1,...,i_m)$ are ordered, we also have the
\emph{ordered} Voronoi cell
\[\vor_{(i_1,...,i_m)}(\lm)=\left\{p \in \vor_{\{i_1,...,i_m\}}(\lm) :
    d(p,p_{i_1})\leq \cdots \leq d(p,p_{i_m})\right\}\]
\end{defn}
Notice that the above cells are all convex,
and that $\vor_i(\lm)=\vor_{\{i\}}(\lm)$ are the usual 
cells in the Voronoi diagram.
If $\Delta=\{i_1,...,i_m\}\subset [n]$ is a subset, we will
    denote its different orderings by
    $\ordering{\Delta}{\sigma}=(i_{\sigma_1},...,i_{\sigma_m})$,
when $i_1<\cdots <i_m$ are in sorted order, and $\sigma\in S_m$.

\subsection{The filtered complex}

Suppose $X\subset \R^d$, $f:X\rightarrow \R$ is
function which is bounded below, and
    let $\lm=\{\lmp_1,...,\lmp_n\}\subset \R^d$ be a collection of points, 
 called landmark points.
  \begin{defn}
  \label{def:main}
    Given the above data, 
define a filtered complex $(X_\lm,f_\lm)$ as follows. 
\begin{enumerate}
\item \label{xdeldef} We have a $k$-simplex $\Delta\in X_\lm$ if
 the ordered Voronoi diagram $\vor_{\ordering{\Delta}{\sig}}(\lm)\cap X$
 is nonempty for every $\sigma \in S_{k+1}$.
\item \label{xfiltdef} The filtration 
function is given by
  \begin{equation}
\label{filtx}
    f_\lm(\Delta)=\max_{\sig \in S_{k+1}}\ 
    \inf_{p\in \vor_{\ordering{\Delta}{\sig}}(\lm)\cap X} f(p).
  \end{equation}
\end{enumerate}
\end{defn}
Then $f_\lm(\Delta)$ is
well-defined since the infimum is over 
a nonempty set, and $f$ is bounded below.
It is clear from the obvious containment of
ordered Voronoi diagrams that $(X_\lm,f_\lm)$ 
satisfies the axioms of a filtered
complex. Notice that the position 
of landmark points $\lmp_j\notin X$
may affect $(X_L,f_L)$.

  \section{A convergence result}
  \label{sec:thm}

Given a subset $A\subset \R^d$, let
\[B_r(A)=\left\{p\in \R^d:
    d(p,A)<r\right\}=\bigcup_{p\in A} B_r(p)\]
be the union of the balls of radius $r$ 
centered at the points of $A$.
For instance, $A\subset B_r(\lm)$ if $\lm$ 
is an $r$-covering of $A$.

We will prove the following:
\begin{thm}
  \label{thm:main}
  Suppose $X=\R^d$, and $f\in C^\infty(\R^d)$
  is smooth with compact sublevel sets $f^{-1} (-\infty,a]$, and $a<b$ are not critical values.
  Then there exists $\delta>0$ such that
  \begin{equation}
    \label{mainthmeq}
    H_k^{a,b}(X_\lm)\cong H_k^{a,b}(X)
    \end{equation}
  whenever $\lm\in\R^d$ satisfies $f^{-1}(-\infty,b]\subset B_\delta(\lm)$.
\end{thm}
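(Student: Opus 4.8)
The plan is to realize $(X_\lm,f_\lm)$ as a controlled perturbation of the Delaunay (nerve) filtration of the Voronoi cover of the sublevel sets, and then to transfer the continuous persistence through the persistent nerve lemma. First I would use the hypotheses that $f$ is smooth, has compact sublevel sets, and that $a<b$ are regular values to fix an $\epsilon>0$ with no critical values in $[a-\epsilon,a+\epsilon]\cup[b-\epsilon,b+\epsilon]$; the non-critical interval theorem of Morse theory then makes the inclusions $f^{-1}(-\infty,a-\epsilon]\hookrightarrow f^{-1}(-\infty,a]\hookrightarrow f^{-1}(-\infty,a+\epsilon]$, and likewise at $b$, homotopy equivalences. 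On the compact set $f^{-1}(-\infty,b+1]$ I would record, once and for all before choosing $\delta$, a Lipschitz constant $\Lambda$ for $f$, a lower bound $c_0>0$ for $\|\nabla f\|$ near the levels $a$ and $b$, and a positive lower bound $\tau$ for the reach (feature size) of the level sets $f^{-1}(t)$ with $t$ in those two windows.

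Write $X(t)=f^{-1}(-\infty,t]$. The condition $X(b)\subset B_\delta(\lm)$ says every point of $X(b)$ has a landmark within $\delta$, so inside $X(b)$ each cell $\vor_i(\lm)$ has radius $<\delta$. I would then prove the two geometric regularity statements on which everything rests, taking $\delta$ small relative to $\tau$ and $c_0$: first, that the cover $\cU_t=\{\vor_i(\lm)\cap X(t)\}_{i\in[n]}$ is \emph{good}, i.e. each nonempty intersection $\bigl(\bigcap_j\vor_{i_j}(\lm)\bigr)\cap X(t)$ is contractible (a convex polytope of diameter $<2\delta$ met with a sublevel set whose boundary is nearly flat at that scale, so that the $-\nabla f$ flow retracts it); and second, that no \emph{spurious} simplices occur, namely every $\Delta\in X_\lm(a)$ has $\mathrm{diam}(\Delta)<2\delta$ with its circumcenter lying in $X(b)$. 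The second statement is where the reach is indispensable: since $\Delta\in X_\lm$ is a Delaunay simplex (its Voronoi face is nonempty) it has an empty circumball $B(c,R)$, and if some ordered cell $\vor_{\ordering{\Delta}{\sig}}(\lm)$ met $X(b)$ while the vertices were far apart, the empty-ball property together with $\delta$-covering would force a component of $X(b)$ of diameter $\lesssim\delta$, which is impossible once $\delta<\tau$. This step, tying the purely combinatorial cell conditions of Definition \ref{def:main} to the smooth geometry, is the one I expect to be the main obstacle.

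Let $g(\Delta)=\inf_{p\in\bigcap_j\vor_{i_j}(\lm)}f(p)$ be the filtration for which the sublevel set $N(a)=\{\Delta:g(\Delta)\le a\}$ is exactly the nerve of $\cU_a$. Since the Voronoi face $\bigcap_j\vor_{i_j}(\lm)$ is contained in every ordered cell $\vor_{\ordering{\Delta}{\sig}}(\lm)$, one gets $f_\lm(\Delta)\le g(\Delta)$ immediately, so $N(a)\subseteq X_\lm(a)$. Conversely, for $\Delta\in X_\lm(a)$ the small-diameter statement places the circumcenter within $2\delta$ of a witness point of $X(a)$, whence the Lipschitz bound gives $g(\Delta)\le f_\lm(\Delta)+C\delta$ with $C=2\Lambda$; thus $X_\lm(a)\subseteq N(a+C\delta)$. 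This yields the interleaving $N(a)\subseteq X_\lm(a)\subseteq N(a+C\delta)$ of filtered complexes. Applying the persistent nerve lemma to the nested good covers $\{\cU_t\}$ identifies the persistence module $\{H_k(N(t))\}$ with $\{H_k(X(t))\}$, compatibly with the structure maps, so in particular $H_k^{a,b}(N)\cong H_k^{a,b}(X)$.

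Finally I would choose $\delta$ small enough that $C\delta<\epsilon$ and combine. The interleaving exhibits $\{H_k(X_\lm(t))\}$ as $C\delta$-interleaved with $\{H_k(X(t))\}\cong\{H_k(N(t))\}$. Because $a$ and $b$ are regular values, the structure maps $H_k(X(a-C\delta))\to H_k(X(a))\to H_k(X(a+C\delta))$ and the analogous maps at $b$ are isomorphisms, so the module $\{H_k(X(t))\}$ is locally constant near $a$ and near $b$. A standard persistence sandwich then forces $\mathrm{im}\bigl(H_k(X_\lm(a))\to H_k(X_\lm(b))\bigr)$ and $\mathrm{im}\bigl(H_k(X(a))\to H_k(X(b))\bigr)$ to coincide, which is exactly the asserted isomorphism \eqref{mainthmeq}.
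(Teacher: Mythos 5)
Your route is genuinely different from the paper's. The paper never invokes a nerve lemma: it first shows (Lemma \ref{lem:vordel}) that for $X=\R^d$ the complex $X_\lm$ coincides with the Delaunay complex; it then studies the affine ``shadow'' map $\pi:|X_\lm|\to\R^d$ of \eqref{eq:defpi}, proving (Lemma \ref{topylem}) that $\pi$ is a homotopy equivalence onto its image because point preimages are contractible affine slices; and it finally sandwiches $\pi(|X_\lm(a)|)$ between $X(a-\eps)$ and $X(a+\eps)$ (Lemma \ref{lem:inc}). The endgame --- Morse-theoretic local constancy at the regular values $a,b$ plus a factoring/interleaving argument --- is essentially identical to your last paragraph. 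What your nerve-plus-interleaving route would buy is an explicitly quantitative $C\delta$-interleaving and less reliance on $X$ being all of $\R^d$; what it costs is that the two geometric regularity statements you isolate carry all the weight, and the paper's argument needs neither of them.

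Those two statements are where the genuine gaps are. (i) The no-spurious-simplices claim is false as literally stated: a simplex of $X_\lm(a)$ need not have diameter $<2\delta$, and ``its circumcenter lies in $X(b)$'' is not the relevant assertion. What you can prove is that the $m$-th nearest landmark to a witness $q\in X(a)$ lies within $O(m\delta)$ of $q$, by induction: if $B_R(q)$ contains only the landmarks already found, then $X(b)\cap B_{R-\delta}(q)$ is trapped near those landmarks, so the component of $q$ in $X(b)$ has diameter $O(\delta)$, which is excluded once you \emph{separately} convert your reach bound on $f^{-1}(b)$ into a lower bound on component diameters of $X(b)$ (reach of a level set does not forbid small sublevel components by itself). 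The constant thus degrades with simplex dimension and must be capped, e.g.\ by truncating to the $(k+1)$-skeleton. (ii) Even granting small diameter, $f_\lm(\Delta)\le a$ only supplies one witness per ordering $\sigma$ lying in $\vor_{\ordering{\Delta}{\sig}}(\lm)$; none of these witnesses need lie on, or even near, the much smaller face $\bigcap_j\vor_{i_j}(\lm)$ whose emptiness or $f$-value governs your nerve filtration $g$. You must manufacture a point of that face with $f\le a+C\delta$, and the device for doing so is exactly the segment-crossing induction in the paper's proof of Lemma \ref{lem:vordel}: join witnesses for opposite orderings inside the convex unordered cell and take the intersection with the bisector. (iii) The cover $\{\vor_i(\lm)\cap X(t)\}$ is \emph{not} good for arbitrary $t$ --- a cell straddling $f^{-1}(t)$ near a saddle value can meet $X(t)$ in a disconnected set --- so the persistent nerve lemma may only be applied at the finitely many levels within $\eps$ of the regular values $a$ and $b$, where your gradient/reach argument applies; the blanket identification of $\{H_k(N(t))\}$ with $\{H_k(X(t))\}$ for all $t$ is not available. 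None of these objections kills the strategy, but each is a step that fails as written and needs the indicated repair.
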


\begin{remark}
By Sard's theorem the 
set of points $a<b$ where the
theorem does not hold are of measure zero,
so the theorem may be interpreted as a 
sort of convergence result of persistence diagrams.
It would be interesting to additionally
understand the convergence rate, in other words the 
dependence of $\delta$ on $\epsilon$.
Additionally, while the examples we consider
in Section \ref{sec:ex} are smooth or at
least continuous, in general one would like
to apply this construction
to discontinuous functions. It would be desirable 
to have a more general result that applies in this case.
\end{remark}

\begin{remark}
The non-shaded region $X\subset \R^2$ in Figure 
\ref{fig:bad}
illustrates what can go wrong if 
we allow $a=b$, in other words
take usual non-persistent homology.
  The region in the figure is contractible, but the 
  above arrangement will have a nontrivial
  1-cycle surrounding the triangle $(\lmp_2,\lmp_3,\lmp_4)$,
  so that $H^{0,0}_1(X_\lm)\ncong H^{0,0}_1(X)$. 
This diagram can happen at any scale $\delta>0$,
contradicting the conclusion of the theorem.
\end{remark}

\begin{figure}
\begin{center}
\begin{scaletikzpicturetowidth}{\textwidth/2}
\begin{tikzpicture}[scale=\tikzscale]
  \path[fill=gray] (-10,-7)--(10,-7)--(10,-.75)--(-10,-3.15)--(-10,-7);
  \path[fill=darkgray]
  (-4,-4)--(0,-6)--(0,-4)--(-4,-4);
  \draw[fill] (0,8) circle [radius=0.2];
  \node[above] at (0,8) {$\lmp_1$};
    \draw[fill] (0,-16) circle [radius=0.2];
    \node[below] at (0,-16) {$\lmp_4$};
        \draw[fill] (-8,0) circle [radius=0.2];
        \node[left] at (-8,0) {$\lmp_2$};
                \draw[fill] (8,0) circle [radius=0.2];
  \node[right] at (8,0) {$\lmp_3$};
  \draw[thick,dashed] (0,-6) to (0,0);
  \draw[thick,dashed] (0,-6) to (4,-8);
  \draw[thick,dashed] (0,-6) to (-4,-8);
  \draw[thick] (-8,0) to (8,0);
  \draw[thick,dashed] (0,0) to (-4,4);
    \draw[thick,dashed] (0,0) to (4,4);
  \draw[thick] (0,8) to (-8,0) to (0,-16) to (8,0) to (0,8);
  \end{tikzpicture}
\end{scaletikzpicturetowidth}
\end{center}
\caption{
Let $X=\R^2$ and $f(x,y)=-y(8y-x)$, 
so that $X(0)=f^{-1}(-\infty,0]$ 
is the complement of the gray region,
shown above in a particular range.
  Four landmark
  points are shown.
  The region $\vor_{(2,3,4)}(\lm)$ is shown in dark
  gray, which is contained in the complement of $X(0)$,
  whereas each of the Voronoi regions representing
  the three boundary segments intersect $X(0)$ nontrivially,
  leading to a non-boundary one-cycle.
  We then have that
  $H_1^{0,0}(X)\ncong H_1^{0,0}(X_\lm)$, at any scaling
  factor $\lm=\{\delta\lmp_1,\delta\lmp_2,\delta\lmp_3,\delta\lmp_4\}$.}
    \label{fig:bad}
\end{figure}
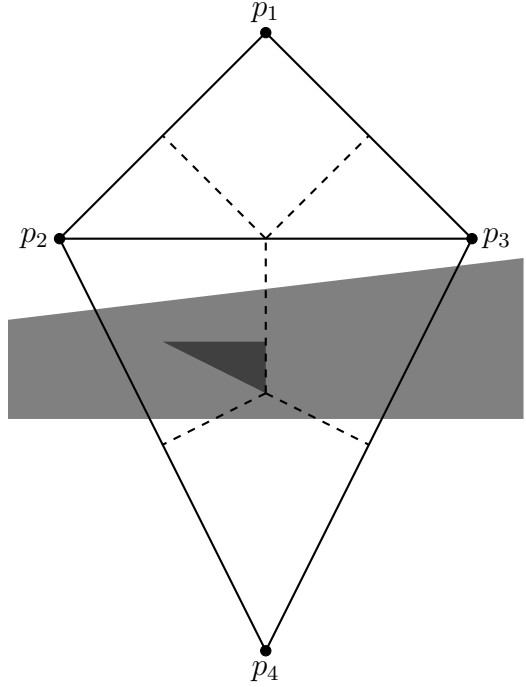

Let
\begin{equation}
  \label{eq:delaunay}
  D_\lm=\left\{\Delta \subset [n]:
    \bigcap_{i \in \Delta} \vor_{i}(\lm) \neq \emptyset\right\}
  \end{equation}
    be the Delaunay complex, 
    which is the nerve of
    the usual Voronoi diagram $\vor(\lm)=\{\vor_i(\lm)\}$.
    In other words, $\Delta \in D_\lm$ if there exists
    a point $p\in \R^d$, and $c\geq 0$ for which 
    \begin{equation}\label{eq:deldist}
    \mbox{$d(p,p_i)=c$ for $i\in \Delta$}, \quad
    \mbox{$c\leq d(p,p_j)$ for all $j\in [n]$}.
    \end{equation}
If the $\lm$ are in general position, 
  we obtain the the Delaunay
  triangulation of $\R^d$.

\begin{lemma}
     \label{lem:vordel}
     If $X=\R^d$, then we have that $X_\lm= D_\lm$.     
\end{lemma}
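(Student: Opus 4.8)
The plan is to establish the two inclusions $D_\lm\subseteq X_\lm$ and $X_\lm\subseteq D_\lm$ separately, after recasting the defining inequalities in affine form. Since $d(p,\lmp_i)^2=|p|^2-2\langle p,\lmp_i\rangle+|\lmp_i|^2$, each difference $d(p,\lmp_i)^2-d(p,\lmp_j)^2$ is affine in $p$ (the $|p|^2$ terms cancel), and the ordering of the distances $d(p,\lmp_i)$, $i\in\Delta$, coincides with that of their squares. Writing $A=\vor_\Delta(\lm)$ for the (convex, polyhedral) unordered cell and $\ell(p)=(d(p,\lmp_i)^2)_{i\in\Delta}$, one has $\vor_{\ordering{\Delta}{\sig}}(\lm)=A\cap\ell^{-1}(C_\sig)$, where $C_\sig$ is the braid chamber $\{x:x_{i_{\sig_1}}\le\cdots\le x_{i_{\sig_{k+1}}}\}$. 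Because $\bigcap_\sig C_\sig$ is exactly the diagonal $\{x_i=x_j\ \forall i,j\}$, a point of $A$ lies in every ordered cell precisely when it is equidistant from all of $\Delta$; comparing with \eqref{eq:deldist}, this shows $\Delta\in D_\lm$ if and only if $\bigcap_\sig\vor_{\ordering{\Delta}{\sig}}(\lm)\neq\emptyset$, so that $X_\lm=D_\lm$ becomes the assertion that the $(k+1)!$ ordered cells are all nonempty if and only if they have a common point.

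The inclusion $D_\lm\subseteq X_\lm$ is immediate: the witnessing point of \eqref{eq:deldist} is equidistant from each $\lmp_i$, $i\in\Delta$, so every ordering inequality holds with equality and this point lies in all the ordered cells at once. For the reverse inclusion I would isolate the following convexity statement: given affine functions $\ell(\cdot)_i$ and a convex $A$ such that for every $\sig$ some point of $A$ realizes the ordering $C_\sig$, there is a point of $A$ on which all $\ell(\cdot)_i$ agree. To prove it, choose $q_\sig\in A\cap\ell^{-1}(C_\sig)$, project $\ell(q_\sig)$ onto the hyperplane $W=\{x:\sum_i x_i=0\}$ (subtracting the mean preserves the chamber, as the ordering is invariant along $\mathbf 1$) to obtain $w_\sig\in C_\sig\cap W$, and show $0\in\chull\{w_\sig\}$. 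Writing $0=\sum_\sig\lambda_\sig w_\sig$ as a convex combination, the point $\bar q=\sum_\sig\lambda_\sig q_\sig$ lies in $A$ by convexity and, since $\ell$ is affine and $\sum_\sig\lambda_\sig=1$, satisfies $\ell(\bar q)\in\R\cdot\mathbf 1$; thus $\bar q$ is equidistant from all of $\Delta$ and $\Delta\in D_\lm$.

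The heart of the matter, and where I expect the real work, is the claim $0\in\chull\{w_\sig\}$. I would argue by separation: if $0\notin\chull\{w_\sig\}$ there is a $v\in W$ with $\langle v,w_\sig\rangle>0$ for all $\sig$, so it suffices to show that for every nonzero $v\in W$ some chamber $C_\tau\cap W$ lies entirely in $\{\langle v,\cdot\rangle\le 0\}$, contradicting $\langle v,w_\tau\rangle>0$. This is a combinatorial fact about the type-$A$ braid fan: a direct computation of the polar cone of the fundamental chamber gives $C_e^\circ=\{v\in W:\text{all partial sums }v_1+\cdots+v_j\text{ are }\ge 0\}$, and since $S_{k+1}$ permutes the chambers orthogonally, the claim is equivalent to the elementary rearrangement fact that any zero-sum vector can be reordered so that all of its partial sums are nonnegative — achieved by listing the entries in nonincreasing order, each partial average then dominating the total average $0$. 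Proving and packaging this rearrangement/polar-cone lemma cleanly is the main obstacle; once it is available, the remainder is convexity bookkeeping. I would also note that nowhere is general position of $\lm$ used, so ties in distances and higher-dimensional equidistant loci are handled automatically, since only the convexity of $A$ and the chamber combinatorics enter.
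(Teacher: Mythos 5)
Your proof is correct, but it takes a genuinely different route from the paper's. The paper argues by induction on the number of vertices: it splits the unordered cell $\vor_\Delta(\lm)$ into the two half-cells $B_0,B_1$ cut out by the bisector of the first two landmarks, obtains by the inductive hypothesis a point in each half equidistant from the remaining landmarks, and then takes the point where the segment joining them crosses the bisector; convexity and the affineness of the equidistance conditions make that crossing point a Delaunay witness. You instead give a global, non-inductive argument: one witness $q_\sig$ per ordering, pushed forward by the affine map $\ell$ into the braid fan, with the crux being that a collection of points, one in each chamber of the type-$A$ arrangement intersected with the zero-sum hyperplane, must contain $0$ in its convex hull --- proved by separation together with the polar-cone/rearrangement description of the chambers; the convex combination $\sum_\sig\lambda_\sig q_\sig$ then lands on the equidistant locus while staying in $\vor_\Delta(\lm)$. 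Your reduction of the lemma to the statement ``the ordered cells are all nonempty iff they share a common point,'' via the observation that $\bigcap_\sig C_\sig$ is the diagonal, matches \eqref{eq:deldist} correctly, and the easy inclusion $D_\lm\subseteq X_\lm$ agrees with the paper's. What your version buys is symmetry in all $(k+1)!$ orderings at once, a clean reusable lemma about affine maps into the braid arrangement, and full explicitness about where convexity enters; what the paper's induction buys is brevity, at the cost of leaving implicit that the hypothesis restricts correctly to each half-cell $B_j$. Both arguments ultimately rest on the same two structural facts: convexity of the unordered higher-order cell and affineness of the squared-distance differences.
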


\begin{proof}

It is clear that $D_\lm \subset X_\lm$,
for if $p\in \vor_i(\lm)$ for all $i\in \Delta$,
then $p\in \vor_{\ordering{\Delta}{\sig}}(\lm)$
for all $\sigma\in S_{k+1}$. For the reverse
inclusion, suppose that
$\vor_{\ordering{\Delta}{\sig}}(\lm)$ 
is nonempty for all $\sigma\in S_{k+1}$.
We must show that there is a point
$p$ satisfying \eqref{eq:deldist}.

Let $\vor_{\Delta}(\lm)=B_0\cup B_1$ 
where $B_0$ are those points $p\in \vor_{\Delta}(\lm)$ 
which are in the half space
$d(p,\lmp_{i_0})\leq d(p,\lmp_{i_1})$,
and similarly for $B_1$, where $\vor_\Delta(\lm)$ is the non-ordered
higher Voronoi cell.
By assumption we have that 
$\vor_{(i_{\tau_1},...,i_{\tau_k})}(\lm)$ intersects both $B_0,B_1$
nontrivially for any $\tau\in S_k$, 
so by induction on $k$, we have a point $q_j \in B_j$
for which $d(q_j,\lmp_1)=\cdots =d(q_j,\lmp_{k})$ for $j=0,1$. Then the line segment connecting
$q_0$ to $q_1$ crosses the hyperplane
$B_0\cap B_1$, 
and the point $p$ of intersection satisfies
\eqref{eq:deldist}.
\end{proof}

Let $|K|$ be the geometric realization of $K$, which is the set
of maps $t:[n]\rightarrow [0,1]$ satisfying 
\begin{equation}
    \label{eq:geometricdef}
    \{i\in [n]:t_i>0\}\in K,\quad 
    \sum_{i\in [n]} t_i=1.
\end{equation}
Define $\pi:|X_{\lm}|\rightarrow \R^d$ by
\begin{equation}\label{eq:defpi} \pi(t_1,...,t_n)= t_1 \lmp_{1}+\cdots +t_n \lmp_{n}.
\end{equation}

\begin{lemma}
      \label{topylem}
  For any subset $A\subset \R^d$, we have that
  $\pi:|A_\lm|\rightarrow \R^d$
   is a homotopy equivalence onto its image.
\end{lemma}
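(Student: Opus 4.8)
The plan is to realize $\pi$ as a map of good covers and invoke the functorial form of the Nerve Lemma. First I would identify the image. Since $\pi$ is affine on each closed simplex of $|A_\lm|$, sending $\Delta$ to the convex hull of the corresponding landmarks, we have
\[ Y := \pi(|A_\lm|) = \bigcup_{\Delta \in A_\lm} \chull\{\lmp_i : i \in \Delta\}, \]
a finite union of convex polytopes. The goal becomes showing that $\pi : |A_\lm| \to Y$ is a homotopy equivalence.

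On the domain I would use the closed stars $\mathrm{St}(i)$ of the vertices $i$ with $\{i\} \in A_\lm$. These form a good closed cover of $|A_\lm|$ whose nerve is exactly $A_\lm$: the intersection $\bigcap_{i \in \Delta}\mathrm{St}(i)$ is the closed star $\mathrm{St}(\Delta)$, which is nonempty precisely when $\Delta \in A_\lm$ and is contractible, deformation retracting to the barycenter of $\Delta$. Pushing this cover forward by $\pi$ gives sets $V_i := \pi(\mathrm{St}(i)) = \bigcup_{\Delta \ni i}\chull\{\lmp_j : j \in \Delta\} \subset Y$. Each $V_i$ is star-shaped with respect to $\lmp_i$ — every hull in the union is convex and contains $\lmp_i$ — so each $V_i$ is contractible, and $\{V_i\}$ covers $Y$. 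On each simplex $\pi$ carries $\mathrm{St}(\Delta)$ into $\bigcap_{i \in \Delta} V_i$, so $\pi$ is a morphism of covers over the identity on the index set $[n]$.

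If I can show that $\{V_i\}$ is again a good cover of $Y$ with nerve $A_\lm$ — that is, that $V_\Delta := \bigcap_{i \in \Delta} V_i$ is contractible and nonempty exactly when $\Delta \in A_\lm$, and that $\pi$ restricts to a homotopy equivalence $\mathrm{St}(\Delta) \to V_\Delta$ on each intersection — then the functorial Nerve Lemma (a map of good covers inducing the identity on nerves and a homotopy equivalence on every nonempty intersection is itself a homotopy equivalence) yields the result. The containment $\pi(\mathrm{St}(\Delta)) \subseteq V_\Delta$ is immediate; the real content is the reverse, namely that $\pi$ creates no homotopically significant overlaps.

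The main obstacle is precisely this overlap control. Because $\pi$ need not be injective — for cocircular landmarks a top simplex of $A_\lm$ can collapse in dimension — I must rule out that hulls $\chull(\{\lmp_i : i \in \Delta\})$ and $\chull(\{\lmp_j : j \in \Delta'\})$ of simplices sharing no common face nonetheless meet in $Y$, which would either disconnect some $V_\Delta$ or merge components and change $\beta_0$. This is where the ordered higher-order Voronoi condition does the work, exactly as in Lemma \ref{lem:vordel}: membership $\Delta \in A_\lm$ forces all ordered cells $\vor_{\ordering{\Delta}{\sig}}(\lm)$, and hence the unordered cell $\vor_\Delta(\lm)$, to be nonempty, so the family $\{\chull(\{\lmp_i : i \in \Delta\})\}_{\Delta \in A_\lm}$ assembles as the complex dual to the ordered Voronoi decomposition of $\R^d$ and is thereby embedded as an honest polyhedral complex. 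Granting this, the hulls meet only along common faces, $V_\Delta$ agrees up to homotopy with the star-shaped set $\pi(\mathrm{St}(\Delta))$, and the nerves on both sides coincide with $A_\lm$. I expect the verification that these dual cells fit together without spurious overlaps — a convexity and connectivity argument generalizing the line-segment step in the proof of Lemma \ref{lem:vordel} — to be the technical heart of the argument.
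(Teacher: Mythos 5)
There is a genuine gap, and it sits exactly where you placed the ``technical heart'' of the argument. The claim you defer --- that because all ordered cells $\vor_{\ordering{\Delta}{\sig}}(\lm)$ are nonempty, the family $\{\chull\{\lmp_i : i\in\Delta\}\}_{\Delta\in A_\lm}$ is ``embedded as an honest polyhedral complex,'' with hulls meeting only along common faces --- is false without a general-position hypothesis on $\lm$, and Lemma \ref{topylem} is stated (and is needed in Theorem \ref{thm:main}) with no such hypothesis. The standard counterexample is four cocircular landmarks in $\R^2$: the circumcenter lies in every $\vor_i(\lm)$, so the full $3$-simplex $\{1,2,3,4\}$ and all of its faces belong to $D_\lm$, and $\pi$ collapses that $3$-simplex onto a planar quadrilateral. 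The edges $\{1,3\}$ and $\{2,4\}$ share no common face, yet their images are the two diagonals, which cross at an interior point. So the pushed-forward cover $\{V_i\}$ acquires intersections not predicted by the face structure, the nerve of $\{V_i\}$ need not be $A_\lm$ in the way your argument requires, and the functorial nerve lemma cannot be applied as proposed. Note also that in the complementary regime where your embedding claim \emph{does} hold (landmarks in general position), $\pi$ is injective on $|D_\lm|$ and is already a homeomorphism onto its image, so the nerve machinery buys nothing there; the entire difficulty of the lemma is concentrated in the degenerate case your argument excludes.

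The paper's proof avoids this by never asserting injectivity or an embedded complex. It partitions $\R^d$ into the images $U_\Delta=\pi(|\Delta|)$ of the (relatively open) simplices indexed by the subsets $\Delta_p=\{i: p\in\vor_i(\lm)\}$, observes that for each point $p$ the fiber $\pi^{-1}(\{p\})$ lies inside a single $|\Delta|$ and is the intersection of an affine subspace with that simplex --- hence convex and contractible even when $\pi$ collapses dimension, as in the cocircular example --- and concludes that a map with contractible point-inverses is a homotopy equivalence onto its image, a property inherited by the restriction to the subcomplex $|A_\lm|$. If you want to repair your approach rather than adopt the paper's, you would have to replace the embedded-complex claim by an analysis of the fibers of $\pi$ over the overlap regions, which is essentially the paper's argument in disguise.
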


\begin{proof}
First, for each $p\in \R^d$, we have a subset
\[\Delta_p=\left\{i\in [n]: p\in \vor_i(\lm)\right\}
\in D_{\lm},\]
which produces those cells for
which we can have inequality on the right hand
side of \eqref{eq:deldist}.
We have a subset 
$D'_\lm=\{\Delta_p:p\in \R^d\}\subset D_\lm$
which in general is not a complex,
but consists of all of $D_\lm$ if $\lm$ are 
in general position.
Then we have that
\[\R^d=\bigsqcup_{\Delta\in D'_\lm} U_\Delta,\quad
U_\Delta=\pi(|\Delta|)\]
is a partition of $\R^d$.

To prove the lemma, it suffices to show that
the inverse image of any point is contractible.
To see this, 
if $p\in U_{\Delta}$, we have that
$\pi^{-1}(\{p\})$ is contained entirely
in $|\Delta|$, and the inverse image is
an affine space which is contractible.
Since $A_\lm\subset D_\lm$ is a subcomplex,
that statement holds for the restriction of 
$\pi$ to $|A_\lm|$ as well, from which result follows.
\end{proof}

\begin{lemma}
\label{lem:inc}
Under the assumptions of
Theorem \ref{thm:main}, 
let $a\in \R$, $\eps>0$. Then
there exists $\delta>0$ so that
\begin{equation}
    \label{eq:inclem}
X(a-\eps) \subset 
\pi(|X_{\lm}(a)|) \subset X(a+\eps)
\end{equation}
whenever $\lm$ satisfies $X(a)\subset B_\delta(\lm)$.
\end{lemma}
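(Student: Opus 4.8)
The plan is to prove the two inclusions in \eqref{eq:inclem} separately, in each case exploiting that once $\lm$ is dense the relevant Delaunay simplices are forced to be geometrically small. Throughout I fix a compact neighborhood of $X(a)$ on which $f$ is $L$-Lipschitz (possible since $f$ is smooth and $X(a)$ is compact), and I record that the infimum in \eqref{filtx} is attained whenever it is $\le a$: as $f$ has compact sublevel sets, $\{f\le a+1\}\cap\vor_{\ordering{\Delta}{\sig}}(\lm)$ is compact, so the value is realized at an actual point of the (closed, convex) ordered cell. I will also assume $\eps$ is smaller than the distance from $a$ to the nearest critical value, which only strengthens the claim since the inclusions for a smaller $\eps'$ imply those for $\eps$. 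Finally, $\delta$ will be taken below several thresholds produced by the steps below, and I take the minimum.

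For the left inclusion, let $q\in X(a-\eps)$. First I would check $q\in\chull(\lm)$: since there are no critical values in $[a-\eps,a]$, the gradient is bounded below on $f^{-1}[a-\eps,a]$, so $q$ lies at distance at least some fixed $w>0$ from $f^{-1}(a)$ and $B_w(q)\subset X(a)$; once $\delta<w$, landmarks cover $B_w(q)$ in every direction around $q$, placing $q$ in their convex hull. Now let $\Delta$ be a closed top-dimensional Delaunay simplex containing $q$, with circumcenter $c$ and circumradius $R$, so that $B_R(c)$ contains no landmark. If $R>\delta$, every point of $B_{R-\delta}(c)$ is at distance $>\delta$ from $\lm$, hence outside $X(a)$, so $f>a$ there; comparing $f(q)\le a-\eps$ with the value $\ge a$ at the nearest point of $\overline{B_{R-\delta}(c)}$, which is within $\delta$ of $q$, the Lipschitz bound forces $L\delta\ge\eps$. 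Thus choosing $\delta<\eps/L$ gives $R\le\delta$, whence $f(c)\le f(q)+L\delta<a$. Since $c$ is equidistant from the vertices of $\Delta$ it lies in $\vor_{\ordering{\Delta}{\sig}}(\lm)$ for every $\sig$, so $f_\lm(\Delta)\le f(c)<a$ and $\Delta\in X_\lm(a)$; therefore $q\in\pi(|\Delta|)\subset\pi(|X_\lm(a)|)$.

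For the right inclusion, let $\Delta\in X_\lm(a)$ and $q\in\pi(|\Delta|)$. Choosing any ordering, the attained minimizer $p\in\vor_\Delta(\lm)$ has $f(p)\le a$, so $p\in X(a)\subset B_\delta(\lm)$, and its nearest landmark is a vertex of $\Delta$ at distance $<\delta$. The crux is a diameter bound: $\operatorname{diam}\Delta\le C\delta$ for a constant $C$ depending only on $d$ and the geometry of $f^{-1}(a)$. Granting this, $q$ lies within $(1+C)\delta$ of $p$, so $f(q)\le f(p)+L(1+C)\delta\le a+\eps$ once $\delta\le\eps/(L(1+C))$, which is the desired inclusion. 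To prove the bound I would note that, with $r=\max_{i\in\Delta}d(p,\lmp_i)$, the ball $B_r(p)$ contains only the at most $d+1$ vertices of $\Delta$ as landmarks, every other landmark being farther by the definition of $\vor_\Delta(\lm)$. Since $a$ is not critical, $X(a)$ is a compact smooth domain of positive reach $\tau$, giving $\mathrm{vol}\!\left(X(a)\cap B_{r-\delta}(p)\right)\ge c\,\min(r-\delta,\tau)^d$; on the other hand every point of this set is within $\delta$ of one of those $\le d+1$ landmarks, so its volume is at most $(d+1)\,\omega_d\,\delta^d$, and comparing the two estimates yields $r\le C\delta$ for $\delta$ small. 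This volumetric step — where density, the non-critical-value hypothesis, and smoothness jointly rule out large simplices that would otherwise ``span'' regions of $\{f>a\}$ — is the main obstacle; the left inclusion, by contrast, is essentially immediate from the empty-circumball argument.
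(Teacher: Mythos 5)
Your proof follows the same skeleton as the paper's --- fix margins $r,r'>0$ with $B_r(X(a-\eps))\subset X(a)$ and $B_{r'}(X(a))\subset X(a+\eps)$ by uniform continuity on compact sublevel sets, then show the relevant Delaunay cells have diameter $O(\delta)$ --- but you actually prove the two geometric facts that the paper's very terse proof only asserts. For the left inclusion the paper says the cell $U_\Delta$ containing $q$ ``must be entirely contained in $X(a)$'' and stops there; besides proving the needed smallness, your empty-circumball argument supplies a step the paper skips altogether, namely that $f_\lm(\Delta)\le a$. That condition is about the \emph{ordered Voronoi cells} of $\Delta$, not about the geometric simplex $\pi(|\Delta|)$, and your observation that the circumcenter $c$ lies in $\vor_{\ordering{\Delta}{\sig}}(\lm)$ for every $\sig$ while $f(c)<a$ is exactly the missing link. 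For the right inclusion, the paper's claim that a point of $U_\Delta$ with vertices near $X(a)$ is within $2\delta$ of $X(a)$ is false without a diameter bound (three landmarks spread around a thin annulus already defeat it); your covering/volume argument is a genuine proof of that bound, and it correctly identifies where the minimizer $p\in\vor_\Delta(\lm)\cap X(a)$, the density hypothesis, and the regular-value assumption enter.

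Two soft spots, neither fatal. First, the assertion that $\Delta$ has at most $d+1$ vertices requires the landmarks to be in general position; the lemma quantifies over all $\lm$ with $X(a)\subset B_\delta(\lm)$, and $X_\lm=D_\lm$ can contain simplices with many cospherical vertices, in which case your upper bound becomes $|\Delta|\,\omega_d\,\delta^d$ with $|\Delta|$ a priori unbounded. The repair is available from Lemma \ref{lem:vordel}: the vertices of any $\Delta\in D_\lm$ lie on a common sphere bounding a landmark-free open ball, so $\bigcup_{i\in\Delta}B_\delta(\lmp_i)$ sits inside a shell of thickness $2\delta$, whose intersection with $B_r(p)$ has volume $O(\delta r^{d-1})$; comparing with the interior lower bound $c(r-\delta)^d$ still yields $r\le C\delta$. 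Second, minor phrasing issues: the existence of a ``top-dimensional'' Delaunay simplex containing $q$ should be replaced by ``some $\Delta$ with $q\in U_\Delta$'' together with a minimal-radius witness point (the circumball argument is unchanged), and your appeal to a gradient \emph{lower} bound to get $B_w(q)\subset X(a)$ is unnecessary --- the uniform-continuity margin $r$ already does this. With those adjustments the argument is sound and, if anything, more complete than the one in the text.
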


\begin{proof}
Since the sublevel sets of $g$ are compact by the
assumptions of the theorem,
we have that $g$ is absolutely continuous 
on each one.
Then there exists $r,r'>0$ so that
\[B_{r}(X(a-\eps)) \subset X(a),
\quad B_{r'}(X(a))\subset X(a+\eps).\]

  For the first inclusion in \eqref{eq:inclem}, 
  choose $\delta=r/2$. Then 
  any $p\in X(a-\eps)$ is contained in
  some $U_{\Delta}\subset \mathbb{R}^d$,
  which must be entirely contained in $X(a)$.
Similarly, if $p\in U_\Delta$ is such
that all the vertices of $U_\Delta$ 
are in $X(a)$, then $p$ must
be within $2\delta$ of $X(a+\eps)$, so we may choose $\delta=r'/2$. 
\end{proof}

We can now prove the theorem.
\begin{proof}
  By the Theorem 3.1 of 
  \cite{milnor1969morse},
  and the fact that the set of critical values is closed, there exists a neighborhood
  $[a-\eps,a+\eps]$ so that $X_{a+\eps}$ deformation retracts onto $X_{a-\eps}$,
  and similarly for $b$. It follows that
  \begin{equation}
    \label{proofeq1}
H_k^{a-\eps,b-\eps}(X)\rightarrow H_k^{a+\eps,b+\eps}(X)
    \end{equation}
is an isomorphism.

By Lemma \ref{lem:inc}, we can find $\delta>0$ so that
$H_k^{a,b}(X)\cong H_k^{a,b}(\pi(|X_\lm|))$ where the right
side is the image 
\[H_k(\pi(|X_\lm(a)|))\rightarrow H_k(\pi(|X_\lm(b)|)),\]
  since the isomorphism in \eqref{proofeq1} factors through
  both sides.
  By Lemma \ref{topylem},
  and the obvious compatibilities between these
 inclusion maps and $\pi$, we find that
  $H_k^{a,b}(X_\lm)\cong H_k^{a,b}(\pi(|X_\lm|))$. 
  Combining the
  two isomorphisms completes the proof.
  
\end{proof}

\section{Examples}
\label{sec:ex}
We illustrate the construction of Definition \ref{def:main} in some examples.
We have used the \javaplex 
package \cite{Javaplex} to generate all persistence
diagrams. In Sections \ref{subsec:datasets} and \ref{subsec:config-space}, we used the MATLAB function \texttt{fmincon} to obtain (local) minimizers over ordered Voronoi cells, per the infimum in equation (\ref{filtx}), with the \texttt{sqp} option enabled.  In Section \ref{subsec:ising}, we found global minimizers over ordered Voronoi cells using Gurobi \cite{gurobi}.

\subsection{Sublevel set persistence for data sets}

\label{subsec:datasets}

We begin with an application 
to data sets via the method discussed in the introduction.

Given a finite subset $D=\{z_1,...,z_N\}\subset \R^d$,
we define the following natural method for estimating density,
though others may be used as well. 
Select a real number $1<h<N$, and define
\[
\rho_i(z)=\exp(-\beta_i \lVert z-z_i\rVert^2),\]
where $\beta_i>0$ is defined to be the 
unique number with the property that 
\[\rho_i(z_1)+\cdots+\rho_i(z_N)=h.\]
We define a density estimator by
\begin{equation}
\nonumber
    \rho(z)=h^{-1}N^{-1}\sum_{i=1}^N \rho_i(z).
\end{equation}

We then choose $n$ landmark points $\lm \subset D$ either at random,
or using a greedy max of min distance algorithm so that they are as spread out
as possible. In other words, beginning with some randomly sampled points,
we consecutively build up $\lm$ by adding the point
whose minimum distance is as large as possible 
from the points that are already in the set. 
While randomly sampling is more objective, 
Theorem \ref{thm:main} indicates that the second method
may produce a desirable complex with fewer points.
We then let $X=\R^d$, and define the filtration 
function in the natural
way $f(z)=-\log(\rho(z))$, so that denser points have lower persistence values.

The resulting persistence diagrams are shown 
for a data set from
\cite{scikittda2019} in Figure \ref{fig:infinity}, 
which shows a noisy data set with 1000 points
roughly lying on an infinity symbol, 
with five dense regions at the center and in the corners. 
The density function was determined by the above method using $h=50$. 
The corresponding complex is shown with persistence values, 
as well as the first two persistence diagrams.
both the $\beta_0$ and $\beta_1$ features
are prominent in the persistence diagrams, which has minimal noise.

\begin{figure}
     \centering
     \begin{subfigure}[b]{0.45\textwidth}
         \centering
         \includegraphics[width=\textwidth]{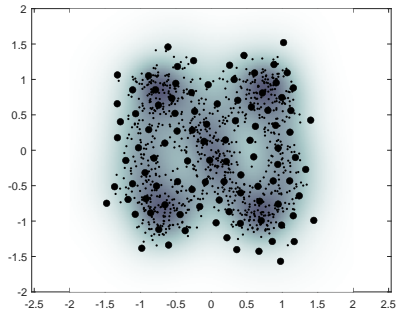}
         \caption{The function $f(x)$ and landmarks $\lm$
         on top of the data set $D$}
         \label{fig:inf1}
     \end{subfigure}
     \hfill
     \begin{subfigure}[b]{0.45\textwidth}
         \centering
         \includegraphics[width=\textwidth]{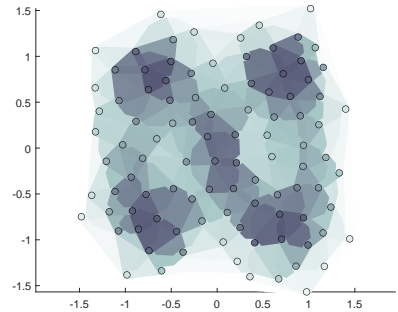}
         \caption{The complex $X_{\lm}$, with persistence values indicated by shading}
         \label{fig:inf2}
     \end{subfigure}
     \begin{subfigure}[b]{0.45\textwidth}
         \centering
         \includegraphics[width=\textwidth]{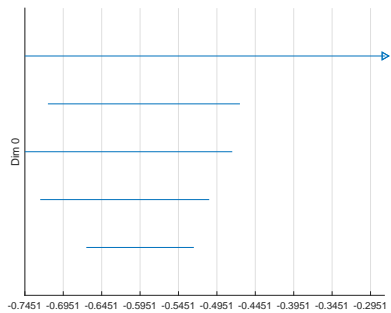}
         \caption{$\beta_0$ diagram}
         \label{fig:inf3}
     \end{subfigure}
          \begin{subfigure}[b]{0.45\textwidth}
         \centering
         \includegraphics[width=\textwidth]{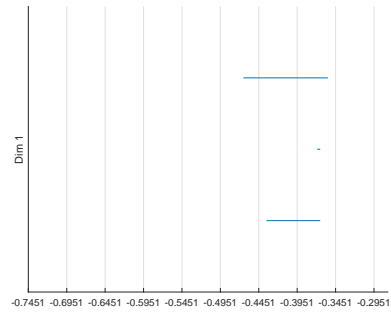}
         \caption{$\beta_1$ diagram}
         \label{fig:inf4}
     \end{subfigure}
        \caption{The ``infinity'' data set from \cite{scikittda2019}. The
        function $f(z)=-\log(\rho(z))$ and landmark points
        $\lm$ are overlayed on the data set
        in Figure \ref{fig:inf1}, and $X_\lm$ is
        shown in \ref{fig:inf2}. The 
        first two \javaplex
        persistence diagrams are shown in \ref{fig:inf3} and \ref{fig:inf4}.}
        \label{fig:infinity}
\end{figure}

\subsection{The continuous Ising model\label{subsec:ising}}

Our second example illustrates a higher-dimensional
persistence function which appears in statistical mechanics.

Let 
\[X=\left\{\sigma\in \R^d: |\sigma_i|\leq 1\right\},\]
be the state space of the continuous form of the one-dimensional Ising model
on $d$-sites, whose discrete form includes only the values $\sigma_i\in \{1,-1\}$.
The values $\sigma_i$ are called the ``spin'' values. Let
\[H(\sigma)=-\sum_{i=1}^{d-1} \sigma_i \sigma_{i+1}\]
be the corresponding Hamiltonian function with no external field.
We also consider the circular case, in which we add
a $-\sigma_n \sigma_1$ term, corresponding to the Ising model on the circle.
We define $f(\sigma)=H(\sigma)$ in either case. 
States with low values of $H(\sigma)$ tend to be ones
in which neighboring points are similar. 
There are two global minimizers when
all $\sigma_i$ are all simultaneously equal to $\pm 1$, in which case
we have $H(\sigma)=-(d-1)$. At higher energy levels, one expects
interesting homological features in the continuous
limit, as the number of sites becomes large.

We selected landmarks point by selecting 20 distinct
states with values $\sigma_i\in \{\pm 1\}$ among those with low
energy values, and we computed the corresponding persistent
homology groups $H_k^{a,b}(X_\lm)$ for both the interval and
and circle example. The persistence diagrams are shown in
Figure \ref{fig:ising}, together with illustrations of the
higher dimensional features.

\begin{figure}
     \centering
          \begin{subfigure}[b]{.49\textwidth}
    \centering
    \includegraphics[width=.49\textwidth]{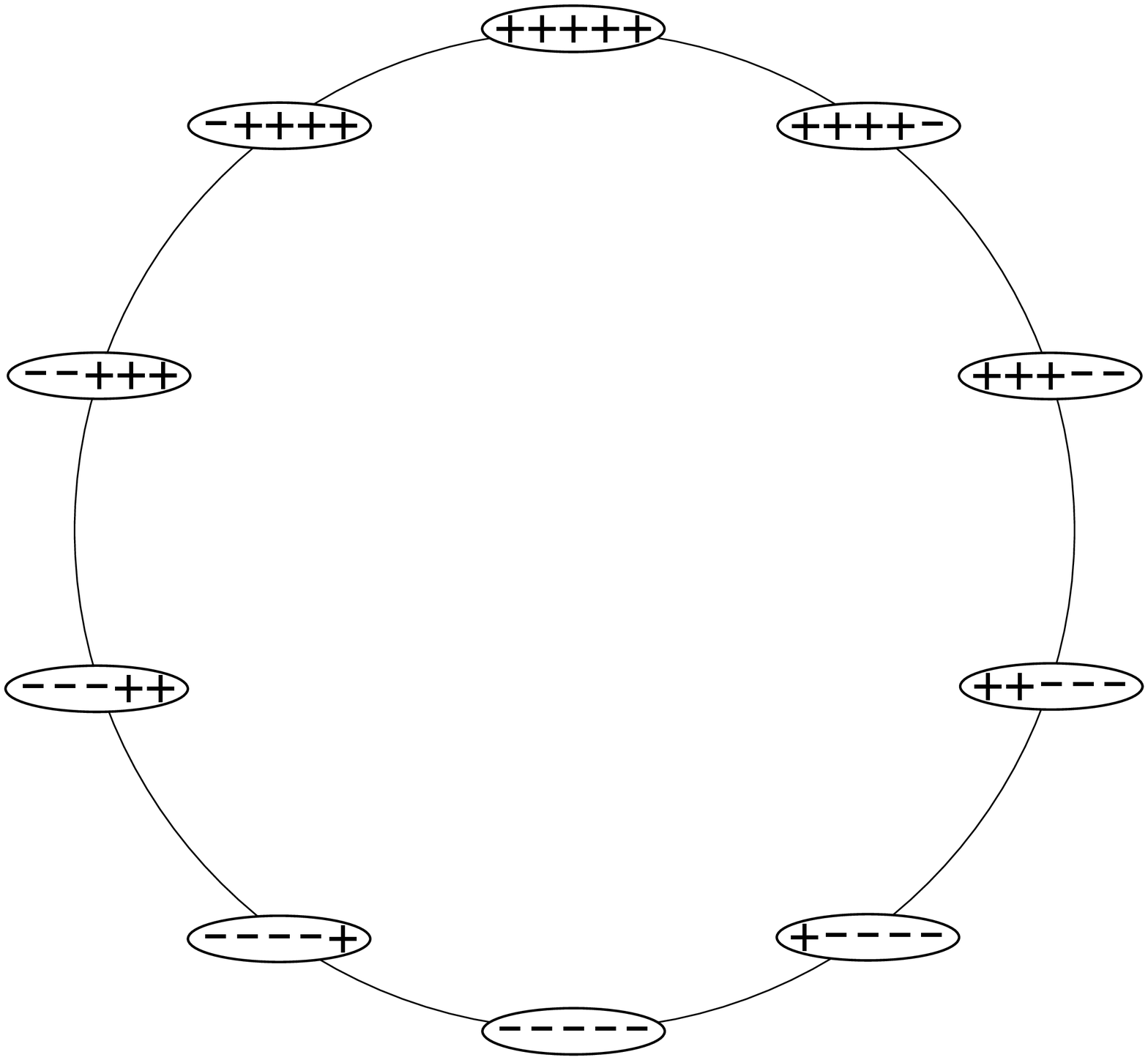}
    \caption{A 1-cycle for the interval case}
    \label{fig:ising-int}
\end{subfigure}
     \begin{subfigure}[b]{.49\textwidth}
    \centering
    \includegraphics[width=.49\textwidth]{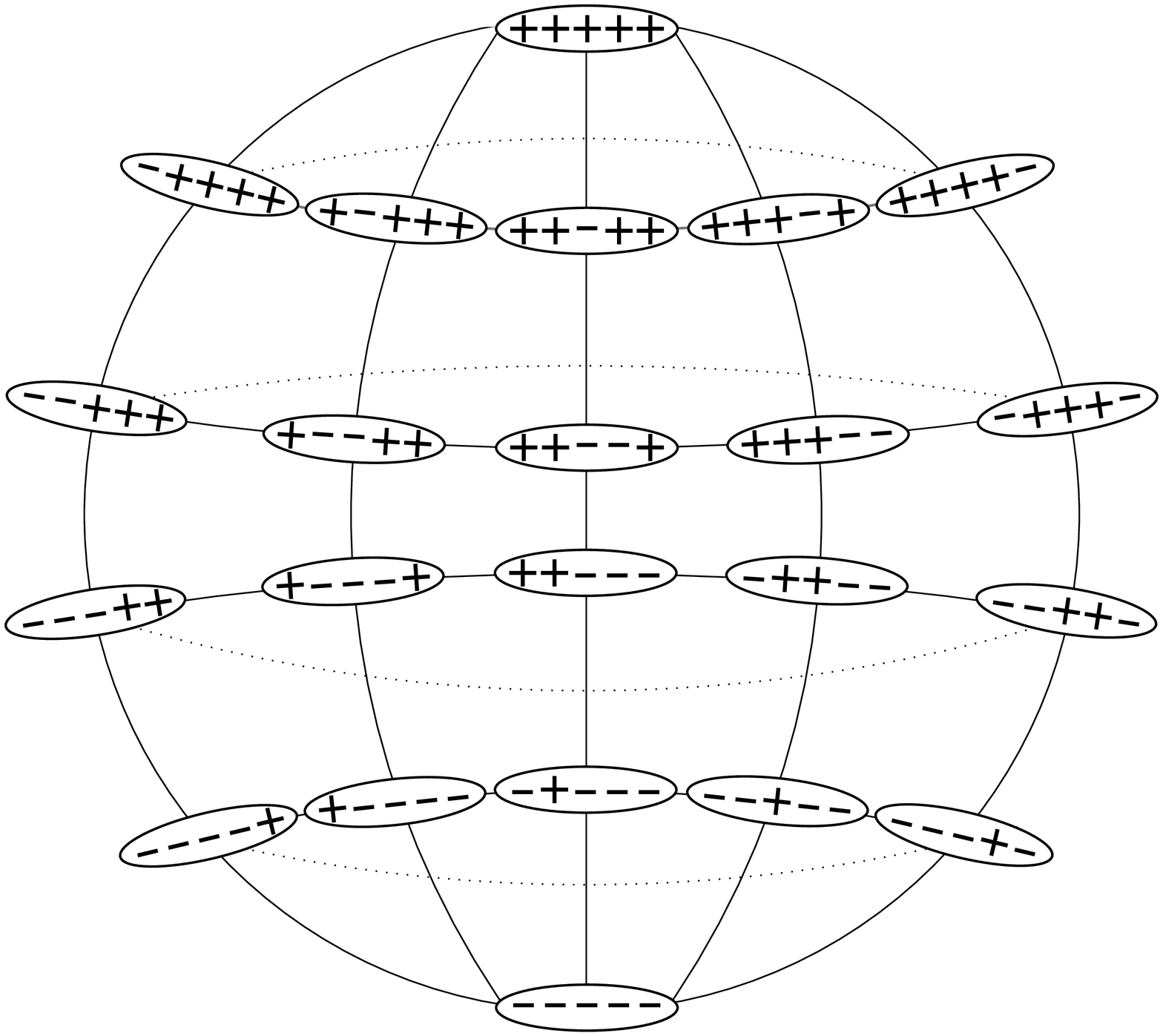}
    \caption{A 2-cycle parametrized by the sphere}
    \label{fig:ising-sphere}
\end{subfigure}
     \begin{subfigure}[b]{0.3\textwidth}
         \centering
         \includegraphics[width=\textwidth]{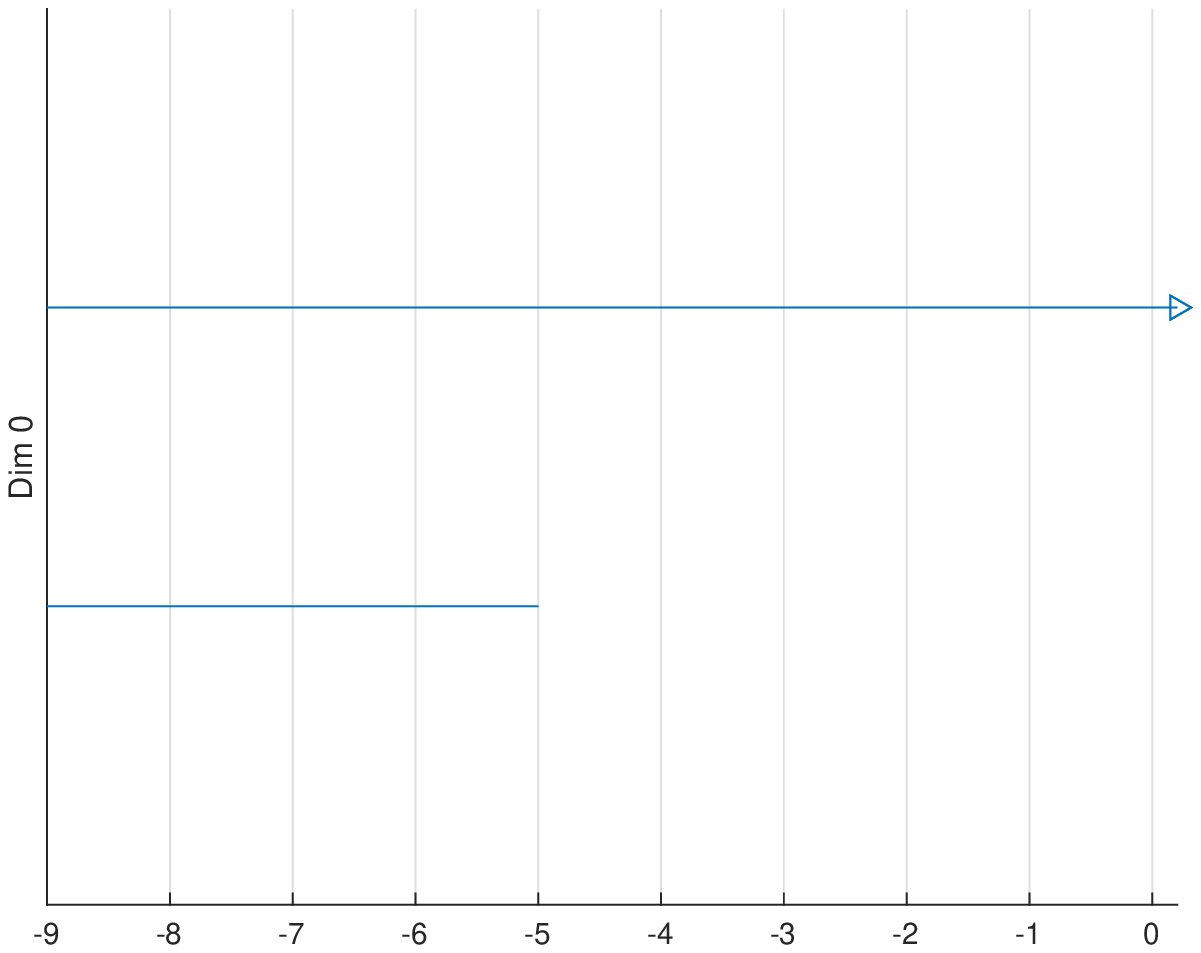}
         \caption{$\beta_0$ diagram, circle}
         \label{fig:ising-circ-betti0}
     \end{subfigure}
     \hfill
     \begin{subfigure}[b]{0.3\textwidth}
         \centering
         \includegraphics[width=\textwidth]{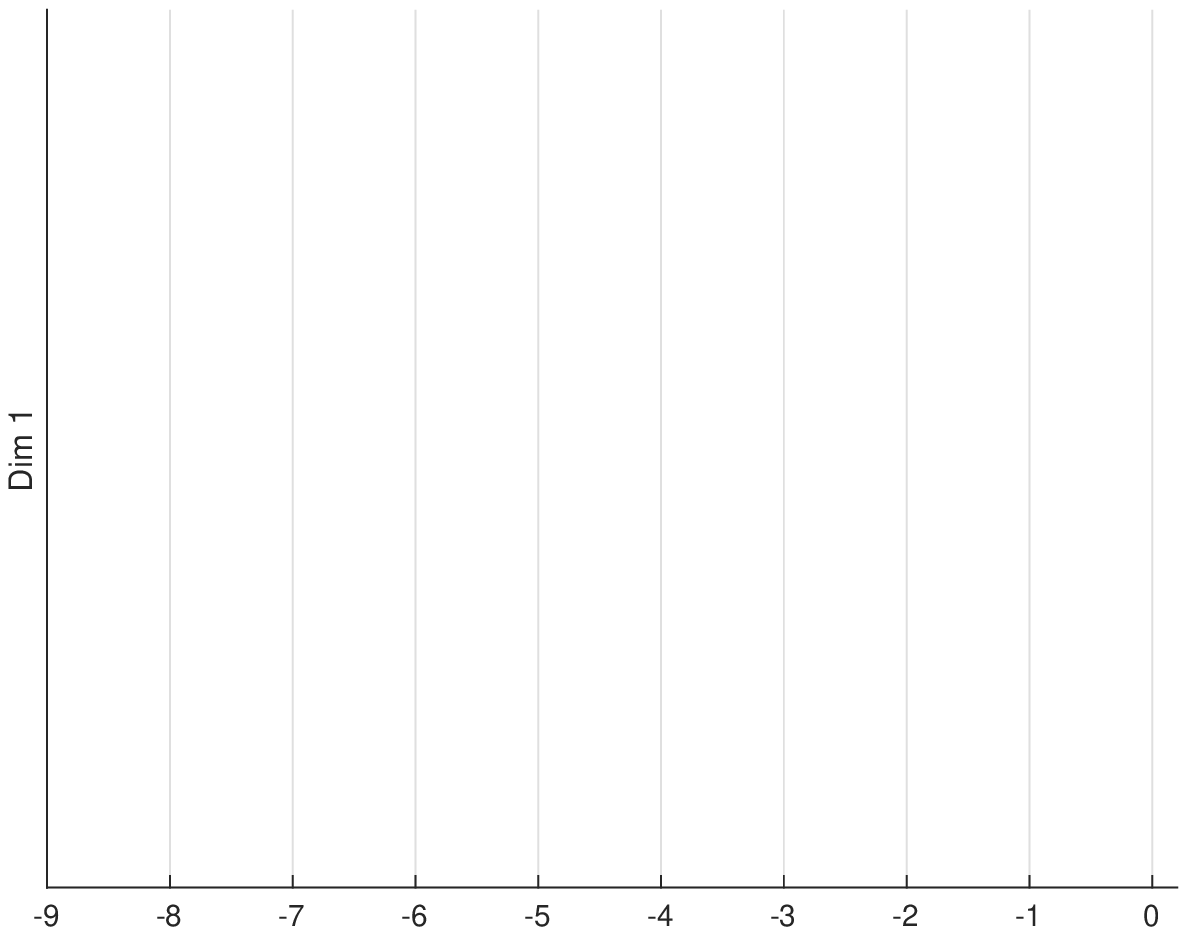}
         \caption{$\beta_1$ diagram, circle}
         \label{fig:ising-circ-betti1}
     \end{subfigure}
     \begin{subfigure}[b]{0.3\textwidth}
         \centering
         \includegraphics[width=\textwidth]{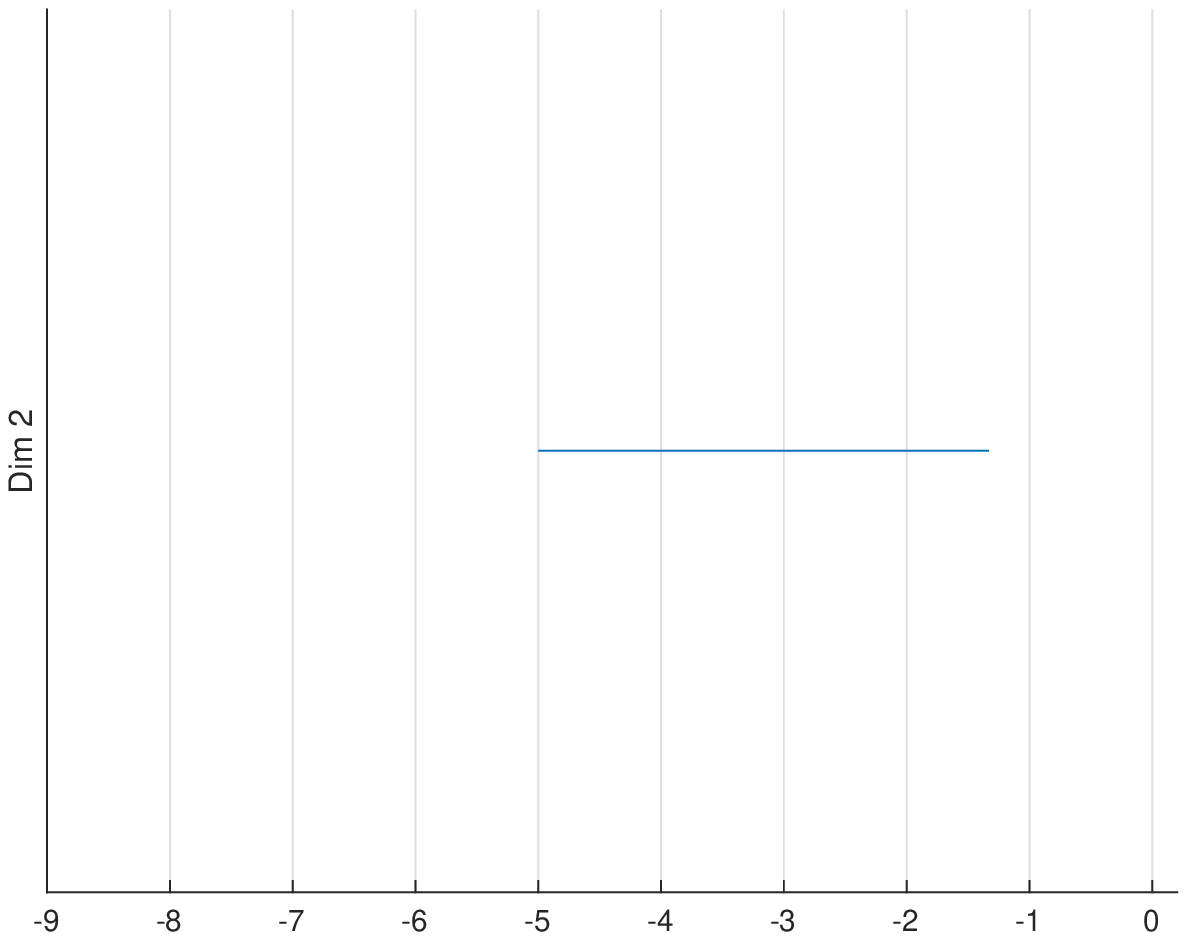}
\caption{$\beta_2$ diagram, circle}
         \label{fig:ising-circ-betti2}
     \end{subfigure}
          \begin{subfigure}[b]{0.3\textwidth}
         \centering
         \includegraphics[width=\textwidth]{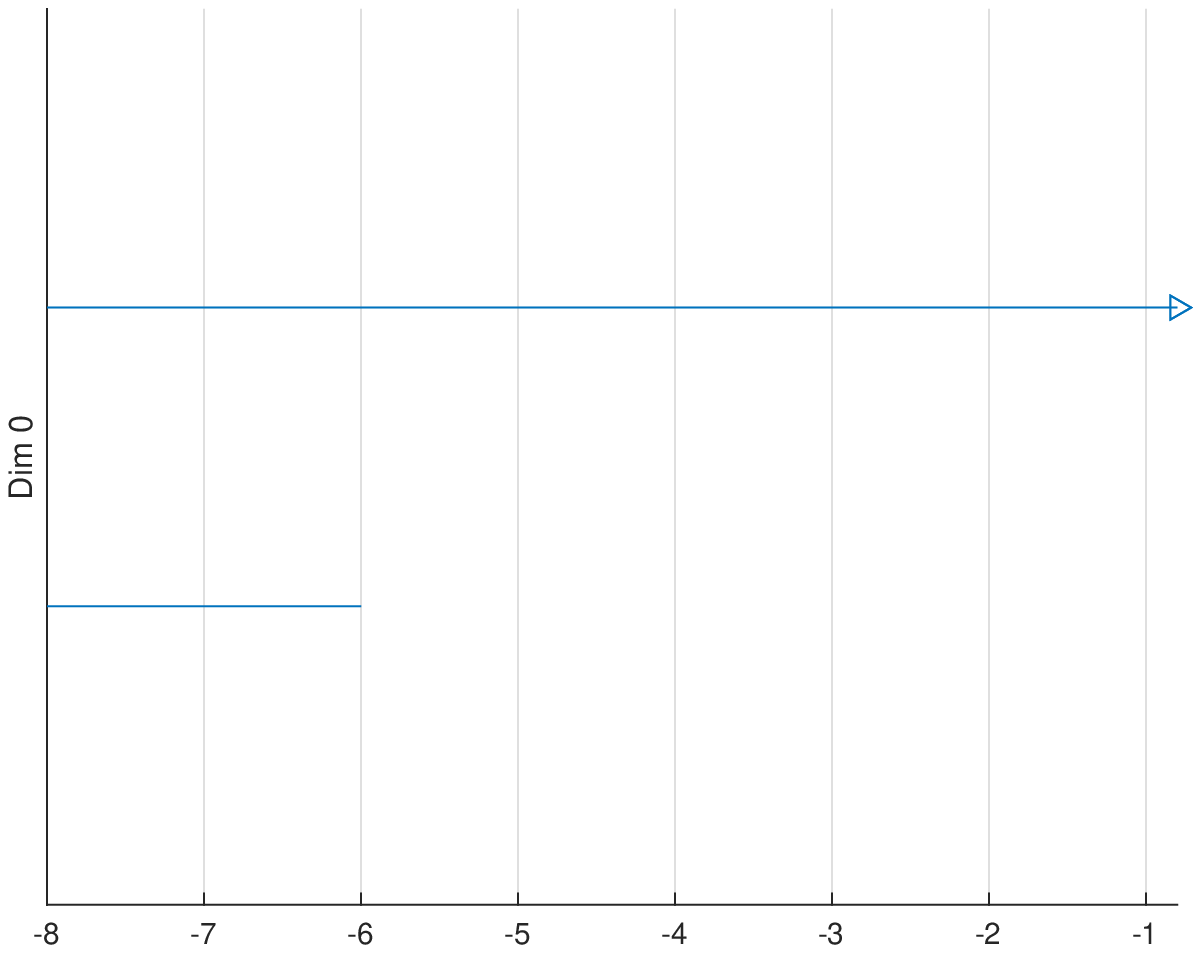}
\caption{$\beta_0$ diagram, interval}
         \label{fig:ising-int-betti0}
     \end{subfigure}
     \hfill
     \begin{subfigure}[b]{0.3\textwidth}
         \centering
         \includegraphics[width=\textwidth]{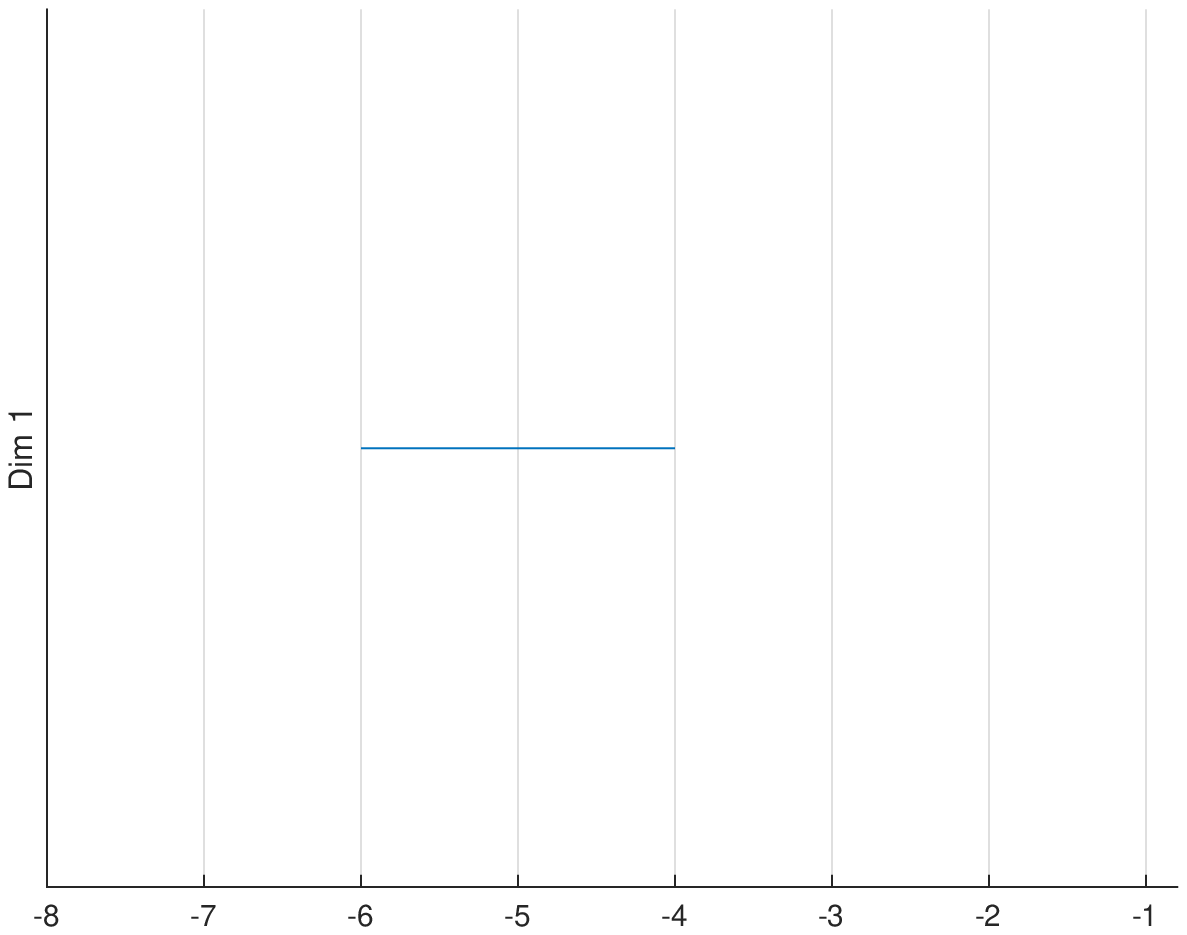}
\caption{$\beta_1$ diagram, interval}
         \label{fig:ising-int-betti1}
     \end{subfigure}
     \begin{subfigure}[b]{0.3\textwidth}
         \centering
         \includegraphics[width=\textwidth]{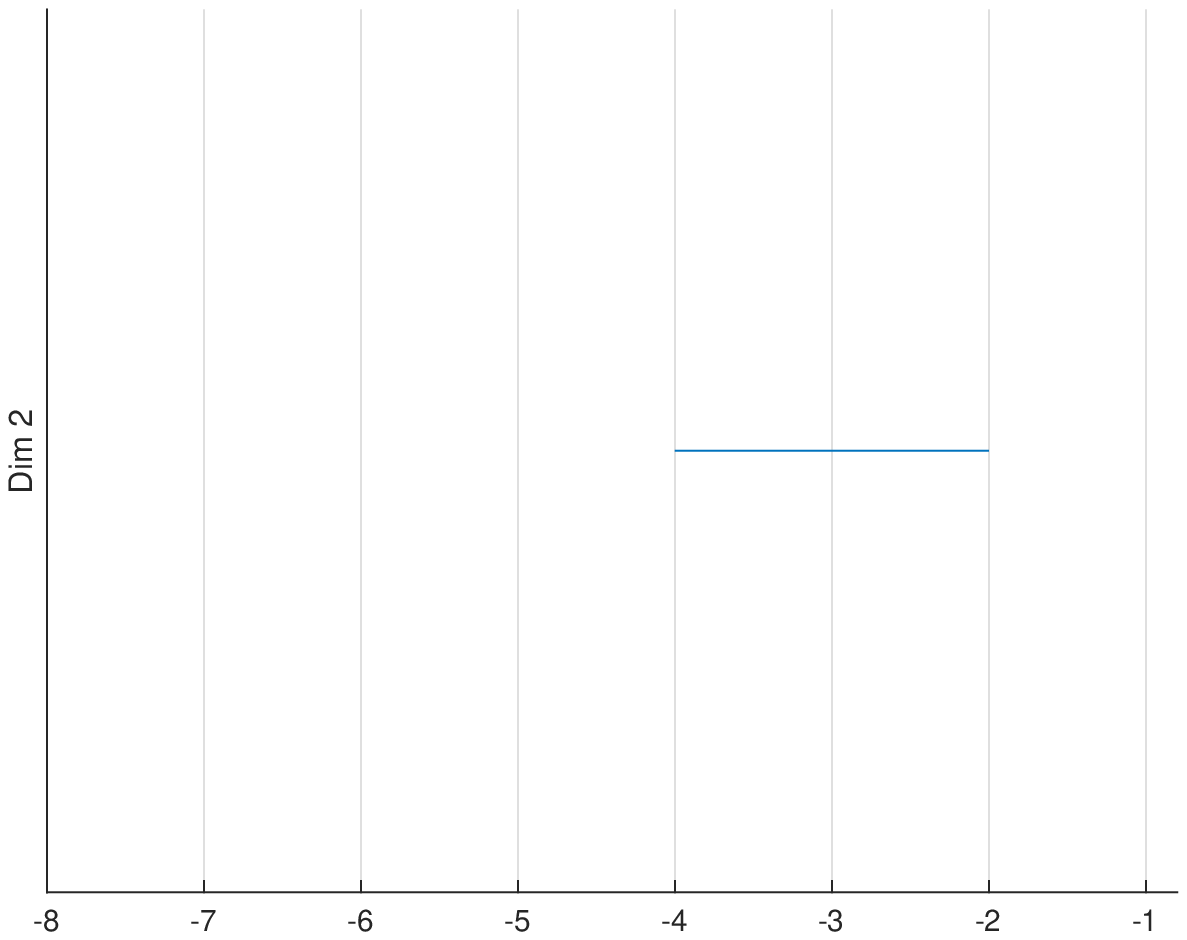}
\caption{$\beta_2$ diagram, interval}
         \label{fig:ising-int-betti2}
     \end{subfigure}
        \caption{The top row shows representative 1- and 2-cycles for the two instances of the Ising model considered in this section (the interval and a circle, respectively). The middle and bottom rows show the for 9 sites on an circle (middle) and the interval (bottom). Each one has two $\beta_0$
        features at lowest energy corresponding to all spin values being
        the same. The top row illustrates a nontrivial 1-cycle for the interval,
        and the nonzero $\beta_2$ feature that is present in both.}
        \label{fig:ising}
\end{figure}

\subsection{Configuration space\label{subsec:config-space}}

In the last example, we apply the complex to
produce the Betti numbers of a topologically
interesting space, by inventing a function whose local minimizers
are the space.

Let $C_n$ be the configuration space of $n$ distinct
ordered points 
$p_i\in \mathbb{R}^2$, in other words the
complement of the diagonal in $(\mathbb{R}^2)^n$.
Then $C_n$ deformation retracts onto 
an $(n-1)$-dimensional singular
subspace $C'_n\subset C_n$ which sometimes
appears in robotics, in which 
$(p_1,...,p_n)$ is mean-centered, and each 
$p_i$ is of distance exactly one
to its nearest neighbor.
For instance, a typical point in $C'_5$ would be

\hspace*{\fill} 
\begin{tikzpicture}
\draw[-,very thick,dashed] (0,0)--(.612,.79);
\draw[-,very thick,dashed] (.612,.79)--(1.34,1.48);
\draw[-,very thick,dashed] (1.34,1.48)--(.86,2.36);
\draw[-,very thick,dashed] (1.34,1.48)--(2.30,1.76);
\filldraw[black] (0,0) circle (2pt) node[anchor=east] {$p_2$};
\filldraw[black] (.612,.79) circle (2pt) node[anchor=east] {$p_4$};
\filldraw[black] (1.34,1.48) circle (2pt) node[anchor=east] {$p_3$};
\filldraw[black] (2.30,1.76) circle (2pt) node[anchor=south] {$p_1$};
\filldraw[black] (.86,2.36) circle (2pt) node[anchor=east] {$p_5$};
\end{tikzpicture}
\hspace{\fill}

\noindent
where we have drawn a  a dashed line between a point and its nearest neighbor.
The homology $H_k(C'_n)\cong H_k(C_n)$
space is well-known \cite{cohen1995configuration},
and for $n=3$ the Betti numbers are given by
$(\beta_0,\beta_1,\beta_2)=(1,3,2)$.



\begin{figure}
     \centering
     \begin{subfigure}[b]{0.3\textwidth}
         \centering
         \includegraphics[width=\textwidth]{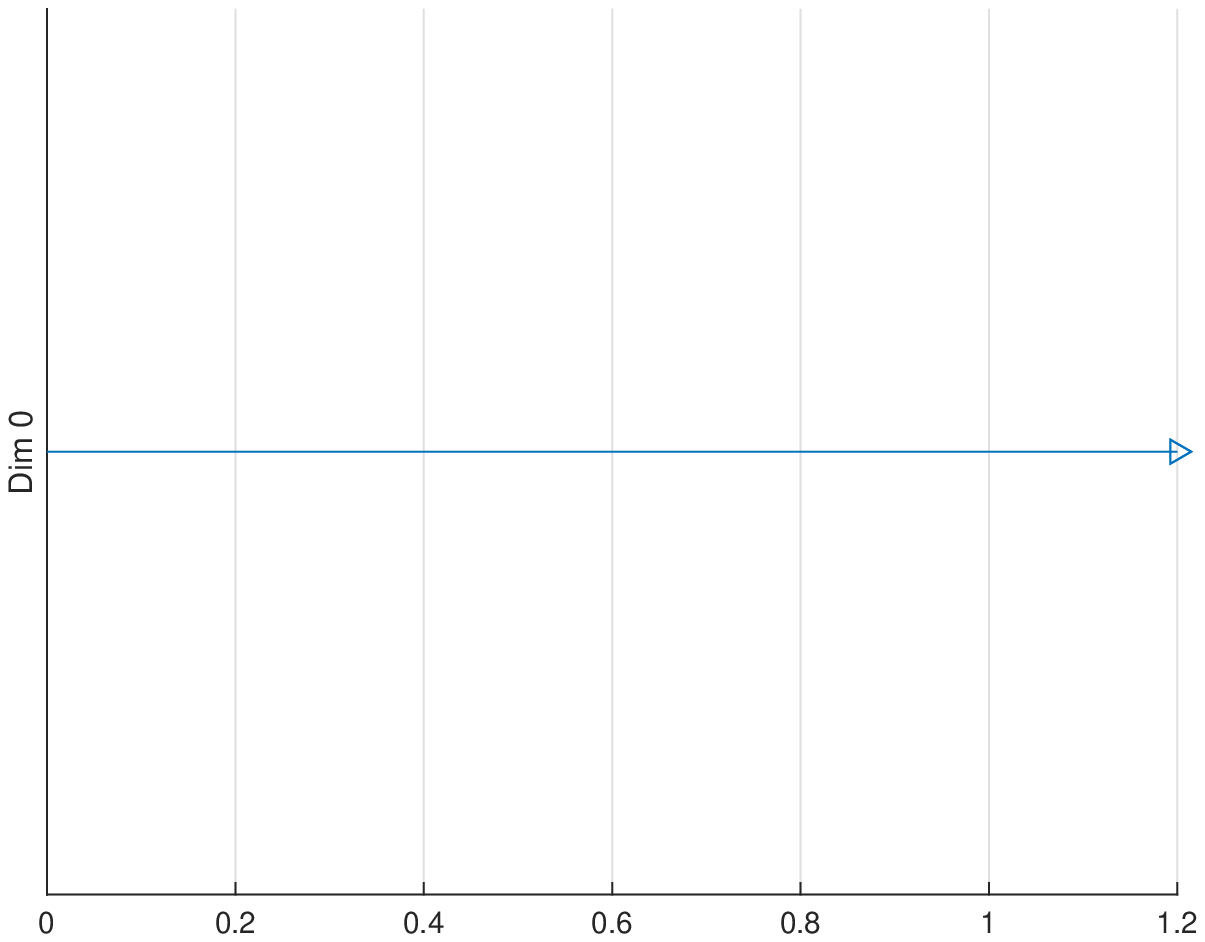}
         \caption{$\beta_0$ diagram}
         \label{fig:config1}
     \end{subfigure}
     \hfill
     \begin{subfigure}[b]{0.3\textwidth}
         \centering
         \includegraphics[width=\textwidth]{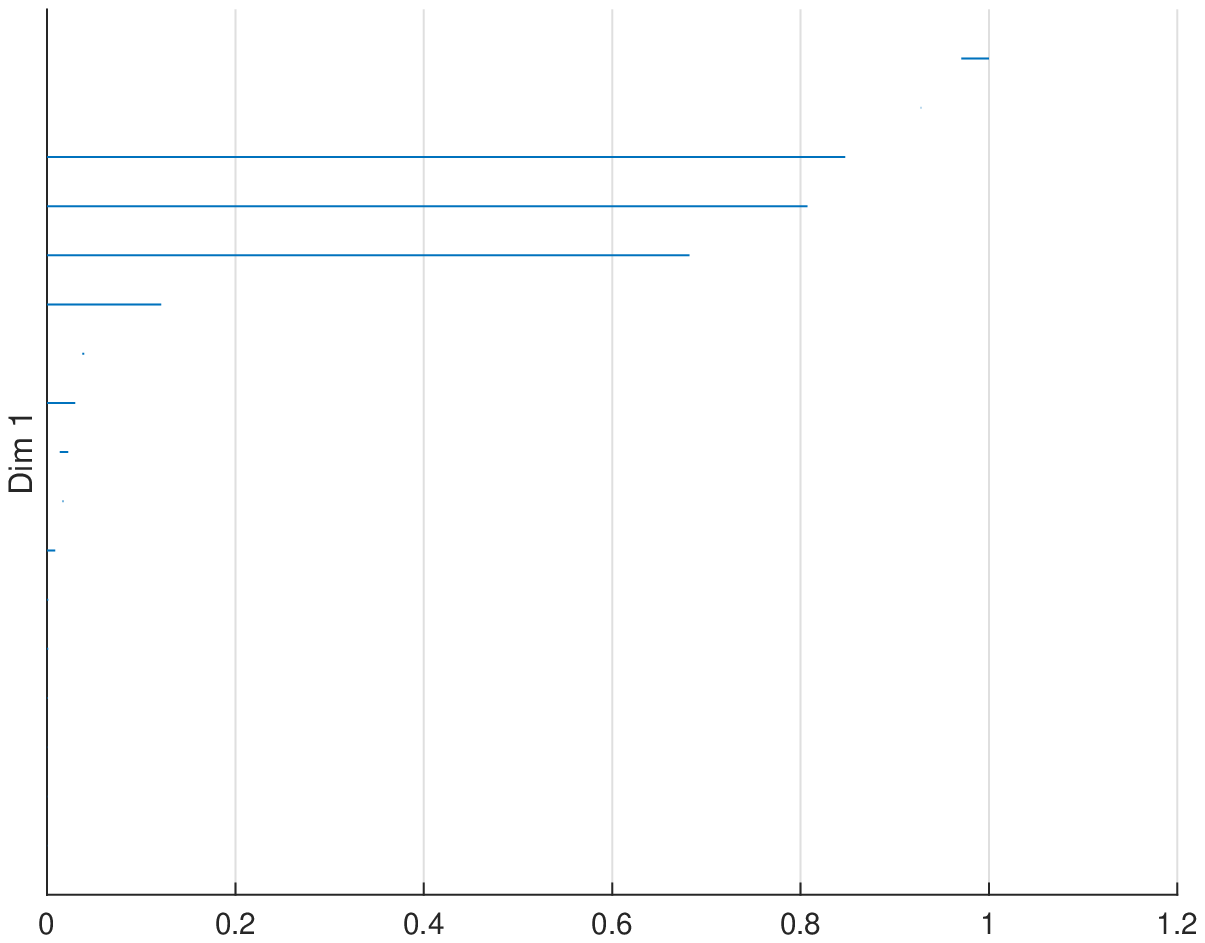}
\caption{$\beta_1$ diagram}
         \label{fig:config2}
     \end{subfigure}
     \hfill
     \begin{subfigure}[b]{0.3\textwidth}
         \centering
         \includegraphics[width=\textwidth]{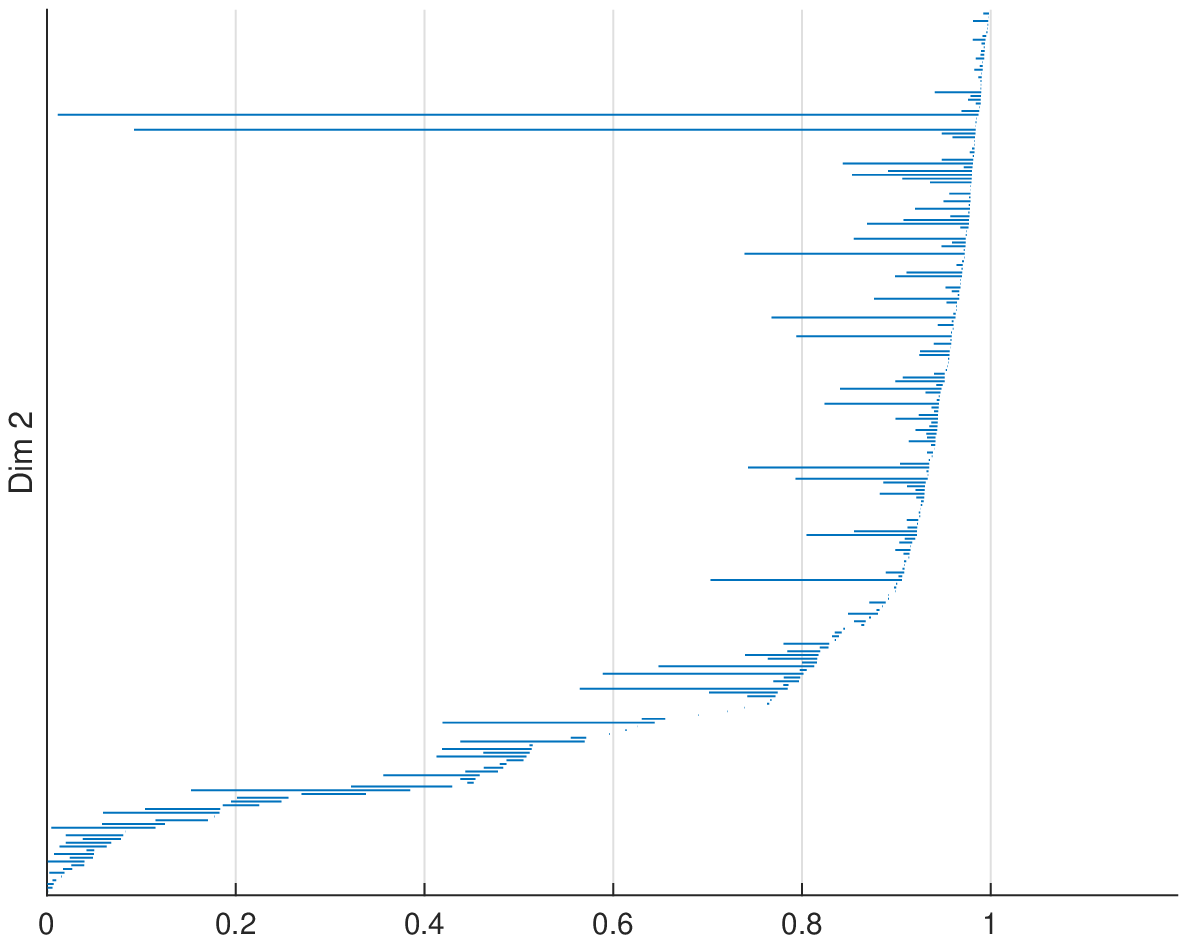}
\caption{$\beta_2$ diagram}
\end{subfigure}
     
        \caption{Persistence diagrams for the
        function in \eqref{eq:configfunc}
        whose extremal
        points are the configuration space $C'_3$.
        The three diagrams show the anticipated Betti numbers for this space,
        ending a little below the value $f(p_1,p_2,p_3)=1$, which is the smallest
        value at which points may collide. 
        }
        \label{fig:config}
\end{figure}

We then consider the following function, which has local minimizers
at the points of $C'_3$:
\begin{equation}
    \label{eq:configfunc}
f(p_1,p_2,p_3)=\max_i \left(\min_{j\neq i} \left(\lVert p_i-p_j \rVert^2-1\right)^2\right)
\end{equation}
We then take
\[X=\{(p_1,p_2,p_3)\in \R^6:p_1+p_2+p_3=(0,0)\},\] so that the sublevel
sets of $f$ are compact when taken as
a function on $X=\R^4$. We sampled
100 landmark points which are maximally spread out
in the range $f^{-1}(-\infty,.1]$ using the max of min distance
algorithm. The persistence diagrams
are shown in Figure \ref{fig:config}.

\bibliographystyle{plain}
\bibliography{refs}

\begin{thebibliography}{10}

\bibitem{adams2021machine}
Henry Adams and Michael Moy.
\newblock Topology applied to machine learning: From global to local.
\newblock {\em Frontiers in Artificial Intelligence}, 4:668302, 05 2021.

\bibitem{beketayev2018measuring}
Kenes Beketayev, Damir Yeliussizov, Dmitriy Morozov, Gunther Weber, and Bernd
  Hamann.
\newblock Measuring the error in approximating the sub-level set topology of
  sampled scalar data.
\newblock {\em International Journal of Computational Geometry and
  Applications}, 28:57--77, 03 2018.

\bibitem{Carlsson2014topological}
G.~Carlsson.
\newblock Topological pattern recognition for point cloud data*.
\newblock {\em Acta Numerica}, 23:289 -- 368, 2014.

\bibitem{carlsson2009topology}
Gunnar Carlsson.
\newblock Topology and data.
\newblock {\em Bulletin of The American Mathematical Society}, 46:255--308, 04
  2009.

\bibitem{carlsson2019homotopy}
Gunnar Carlsson.
\newblock {\em Persistent homology and applied homotopy theory}, chapter~8.
\newblock CRC Press, 2019.

\bibitem{chazal2013clustering}
F.~Chazal, L.~Guibas, S.~Oudot, and P.~Skraba.
\newblock Persistence-based clustering in riemannian manifolds.
\newblock {\em J. ACM}, 60:41:1--41:38, 2013.

\bibitem{chung2020time}
Yu-Min Chung, W.~Cruse, and A.~Lawson.
\newblock A persistent homology approach to time series classification.
\newblock {\em ArXiv}, abs/2003.06462, 2020.

\bibitem{chung2021persistent}
Yu-Min Chung, Chuan-Shen Hu, Yu-Lun Lo, and Hau-Tieng Wu.
\newblock A persistent homology approach to heart rate variability analysis
  with an application to sleep-wake classification.
\newblock {\em Frontiers in Physiology}, 12:202, 2021.

\bibitem{cohen1995configuration}
F.R. Cohen.
\newblock On configuration spaces, their homology, and lie algebras.
\newblock {\em Journal of Pure and Applied Algebra}, 100(1):19--42, 1995.

\bibitem{edelsbrunner2008survey}
Herbert Edelsbrunner and John Harer.
\newblock Persistent homology—a survey.
\newblock {\em Discrete and Computational Geometry - DCG}, 453, 01 2008.

\bibitem{gunther2012efficient}
David Günther, Jan Reininghaus, Hubert Wagner, and Ingrid Hotz.
\newblock Efficient computation of 3d morse-smale complexes and persistent
  homology using discrete morse theory.
\newblock {\em The Visual Computer}, 28:1--11, 10 2012.

\bibitem{gurobi}
Gurobi~Optimization LLC.
\newblock Gurobi optimizer reference manual, 2021.

\bibitem{milnor1969morse}
J.~Milnor, M.~Spivak, and R.~Wells.
\newblock {\em Morse Theory. (AM-51), Volume 51}.
\newblock Princeton University Press, 1969.

\bibitem{mirth2020representations}
Joshua Mirth, Yanqin Zhai, Johnathan Bush, Enrique~G. Alvarado, Howie Jordan,
  Mark Heim, Bala Krishnamoorthy, Markus Pflaum, Aurora Clark, Y~Z, and Henry
  Adams.
\newblock Representations of energy landscapes by sublevelset persistent
  homology: An example with n -alkanes.
\newblock {\em The Journal of Chemical Physics}, 154(11), 2020.

\bibitem{myers2020separating}
Audun Myers, Firas Khasawneh, and Brittany Fasy.
\newblock Separating persistent homology of noise from time series data using
  topological signal processing, 12 2020.

\bibitem{nishikawa1976ising}
Ken-Ichi Nishikawa and Huzio Nakano.
\newblock A continuous ising model exhibiting phase transitions of first or
  second order.
\newblock {\em Progress of Theoretical Physics}, 56:773--785, 09 1976.

\bibitem{harer2013sliding}
Jose Perea and John Harer.
\newblock Sliding windows and persistence: An application of topological
  methods to signal analysis.
\newblock {\em Foundations of Computational Mathematics}, 15, 07 2013.

\bibitem{ravishanker2019topological}
Nalini Ravishanker and Renjie Chen.
\newblock Topological data analysis (tda) for time series, 2019.

\bibitem{scikittda2019}
Nathaniel Saul and Chris Tralie.
\newblock Scikit-tda: Topological data analysis for python, 2019.
\newblock \url{https://doi.org/10.5281/zenodo.253369}.

\bibitem{seversky2016pattern}
Lee~M. Seversky, S.~Davis, and M.~Berger.
\newblock On time-series topological data analysis: New data and opportunities.
\newblock {\em 2016 IEEE Conference on Computer Vision and Pattern Recognition
  Workshops (CVPRW)}, pages 1014--1022, 2016.

\bibitem{Javaplex}
Andrew Tausz, Mikael Vejdemo-Johansson, and Henry Adams.
\newblock Java{P}lex: {A} research software package for persistent
  (co)homology.
\newblock In Han Hong and Chee Yap, editors, {\em Proceedings of ICMS 2014},
  Lecture Notes in Computer Science 8592, pages 129--136, 2014.
\newblock available at \url{http://appliedtopology.github.io/javaplex}.

\bibitem{munch2020hurricanes}
Sarah Tymochko, Elizabeth Munch, Jason Dunion, Kristen Corbosiero, and Ryan
  Torn.
\newblock Using persistent homology to quantify a diurnal cycle in hurricanes.
\newblock {\em Pattern Recognition Letters}, 133:137--143, 2020.

\end{thebibliography}
\end{document}